\title{Invariance of Brownian motion associated with exponential functionals}
\author{Yuu Hariya\thanks{Supported in part by JSPS KAKENHI Grant Numbers 17K05288, 22K03330}}
\date{\empty}
\numberwithin{equation}{section}
\theoremstyle{plain}
\newtheorem{thm}{Theorem}[section]
\newtheorem{prop}{Proposition}[section]
\newtheorem{cor}{Corollary}[section]
\newtheorem{lem}{Lemma}[section]
\theoremstyle{definition}
\theoremstyle{remark}
\newtheorem{rem}{Remark}[section]
\begin{document}

\newcommand\ND{\newcommand}
\newcommand\RD{\renewcommand}

\ND\N{\mathbb{N}}
\ND\R{\mathbb{R}}
\ND\Q{\mathbb{Q}}
\ND\C{\mathbb{C}}

\ND\F{\mathcal{F}}

\ND\kp{\kappa}

\ND\ind{\boldsymbol{1}}

\ND\al{\alpha }
\ND\la{\lambda }
\ND\La{\Lambda }
\ND\ve{\varepsilon}
\ND\Om{\Omega}

\ND\ga{\gamma}

\ND\lref[1]{Lemma~\ref{#1}}
\ND\tref[1]{Theorem~\ref{#1}}
\ND\pref[1]{Proposition~\ref{#1}}
\ND\sref[1]{Section~\ref{#1}}
\ND\ssref[1]{Subsection~\ref{#1}}
\ND\aref[1]{Appendix~\ref{#1}}
\ND\rref[1]{Remark~\ref{#1}} 
\ND\cref[1]{Corollary~\ref{#1}}
\ND\eref[1]{Example~\ref{#1}}
\ND\fref[1]{Fig.\ {#1} }
\ND\lsref[1]{Lemmas~\ref{#1}}
\ND\tsref[1]{Theorems~\ref{#1}}
\ND\dref[1]{Definition~\ref{#1}}
\ND\psref[1]{Propositions~\ref{#1}}
\ND\rsref[1]{Remarks~\ref{#1}}
\ND\sssref[1]{Subsections~\ref{#1}}

\ND\pr{\mathbb{P}}
\ND\ex{\mathbb{E}}
\ND\br{W}

\ND\prb[2]{P^{(#1)}_{#2}}
\ND\exb[2]{E^{(#1)}_{#2}}

\ND\eb[1]{e^{B_{#1}}}
\ND\ebm[1]{e^{-B_{#1}}}
\ND\hbe{\Hat{\beta}}
\ND\hB{\Hat{B}}
\ND\cB{\Tilde{B}}
\ND\cA{\Tilde{A}}
\ND\argsh{\mathrm{Argsh}\,}
\ND\zmu{z_{\mu}}
\ND\GIG[3]{\mathrm{GIG}(#1;#2,#3)}
\ND\gig[3]{I^{(#1)}_{#2,#3}}
\ND\argch{\mathrm{Argch}\,}

\ND\vp{\varphi}
\ND\eqd{\stackrel{(d)}{=}}
\ND\db[1]{B^{(#1)}}
\ND\dcb[1]{\cB ^{(#1)}}
\ND\da[1]{A^{(#1)}}
\ND\dca[1]{\cA ^{(#1)}}
\ND\dz[1]{Z^{(#1)}}
\ND\Z{\mathcal{Z}}

\ND\anu{\alpha ^{(\nu )}}

\ND\Ga{\Gamma}
\ND\calE{\mathcal{E}}
\ND\calD{\mathcal{D}}

\ND\f{F}
\ND\g{G}

\ND\tr{\mathbb{T}}

\ND\ct{\mathcal{T}}

\ND\T{T}

\ND\id{\mathrm{Id}}

\ND{\rmid}[1]{\mathrel{}\middle#1\mathrel{}}

\def\thefootnote{{}}

\maketitle 
\begin{abstract}
It is well known that Brownian motion enjoys several 
distributional invariances such as the scaling property and 
the time reversal. In this paper, we prove another invariance 
of Brownian motion that is compatible with the time reversal. 
The invariance, which seems to be new to our best knowledge, 
is described in terms of an anticipative path transformation 
involving exponential functionals as anticipating factors. 
Some related results are also provided.
\footnote{Mathematical Institute, Tohoku University, Aoba-ku, Sendai 980-8578, Japan}
\footnote{E-mail: hariya@tohoku.ac.jp}
\footnote{{\itshape Keywords and Phrases}:~{Brownian motion}; {exponential functional}; {anticipative path transformation}}
\footnote{{\itshape MSC 2020 Subject Classifications}:~Primary~{60J65}; Secondary~{60J55}, {60G30}}
\end{abstract}

%%%%%% New section %%%%%%
\section{Introduction}\label{;intro}

Let $B=\{ B_{t}\} _{t\ge 0}$ be a one-dimensional standard 
Brownian motion and, for every $\mu \in \R $, denote by 
$\db{\mu }=\bigl\{ \db{\mu }_{t}:=B_{t}+\mu t\bigr\} _{t\ge 0}$ the 
Brownian motion with drift $\mu $. We define 
$\bigl\{ \da{\mu }_{t}\bigr\} _{t\ge 0}$ to be the quadratic variation of 
the geometric Brownian motion $\bigl\{ e^{\db{\mu }_{t}}\bigr\} _{t\ge 0}$, 
namely, 
\begin{align*}
 \da{\mu }_{t}:=\int _{0}^{t}e^{2\db{\mu }_{s}}ds.
\end{align*}
These exponential functionals of Brownian motion have 
importance in a variety of fields in probability theory such as 
mathematical finance, diffusion processes in random media, 
and probabilistic studies of Laplacians on hyperbolic spaces; 
see the detailed surveys \cite{mySI, mySII} by Matsumoto and Yor.

Let $C([0,\infty );\R )$ be the space of 
continuous functions $\phi :[0,\infty )\to \R $, on which we 
define 
\begin{align*}
 A_{t}(\phi ):=\int _{0}^{t}e^{2\phi _{s}}\,ds,\quad t\ge 0, 
\end{align*}
so that $\da{\mu }_{t}=A_{t}(\db{\mu }),\,t\ge 0$. When 
$\mu =0$, with slight abuse of notation, we simply write 
$A_{t}$ for $\da{0}_{t}$. For every fixed $t>0$, we 
have introduced in \cite{har22} the family 
$\{ \tr ^{t}_{z}\} _{z\in \R }$ of anticipative 
path transformations defined by 
\begin{align}\label{;ttrans}
 \tr ^{t}_{z}(\phi )(s):=\phi _{s}-\log \left\{ 
 1+\frac{A_{s}(\phi )}{A_{t}(\phi )}\left( e^{z}-1\right) 
 \right\} ,\quad 0\le s\le t, 
\end{align}
mapping the space $C([0,t];\R )$ of real-valued 
continuous functions $\phi $ over $[0,t]$ into itself. 
If there is no risk of ambiguity, we suppress the superscript $t$ 
from the notation and suppose that each $\tr _{z}$ acts on 
$C([0,t];\R )$. Let $\beta =\{ \beta (s)\} _{s\ge 0}$ be another 
one-dimensional standard Brownian motion that is independent 
of $B$. In \cite{har22}, we have shown the following identity in law 
which exhibits an invariance of the law of Brownian motion 
in the presence of the independent element $\beta $:  
for every $x\in \R $, the process 
\begin{align}\label{;ttrans1}
 \tr _{x+B_{t}-\argsh (e^{B_{t}}\sinh x +\beta (A_{t}))}
 (B)(s),\quad 0\le s\le t, 
\end{align}
is identical in law with $\{ B_{s}\} _{0\le s\le t}$. Here 
\begin{align*}
 \argsh x\equiv \log \Bigl( x+\sqrt{1+x^{2}}\Bigr) ,\quad x\in \R , 
\end{align*}
is the inverse function of the hyperbolic sine function. 
The above invariance extends Bougerol's celebrated identity 
in law (\cite{bou}): 
\begin{align}
 \beta (A_{t})\eqd \sinh B_{t}; \label{;boug}
\end{align}
indeed, evaluating \eqref{;ttrans1} at $s=t$ and taking $x=0$ 
leads to $\argsh \beta (A_{t})\eqd B_{t}$, hence to 
\eqref{;boug}.

This paper is a continuation of \cite{har22} and aims at 
providing an invariance similar to the above but 
without the presence of an independent element.
We retain $t>0$ fixed and define another  
anticipative path transformation $\ct \equiv \ct ^{t}$ 
on $C([0,t];\R )$ by 
\begin{align}\label{;defct}
 \ct (\phi )(s):=\tr _{2\phi _{t}}(\phi )(s),\quad 
 0\le s\le t,
\end{align}
for $\phi \in C([0,t];\R )$. Then, as in the theorem below, 
the law of Brownian motion is invariant under $\ct $, which, 
as far as we are concerned, has remained unseen until now.

\begin{thm}\label{;tmain}
 It holds that 
 \begin{align*}
   \{ \ct (B)(s)\} _{0\le s\le t}\eqd \{ B_{s}\} _{0\le s\le t}.
 \end{align*}
\end{thm}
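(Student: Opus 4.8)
The plan is to realise $\ct $ as a limit of the anticipative transformations in \eqref{;ttrans1}, letting the independent Brownian motion $\beta $ degenerate through a suitable limiting procedure. For $x\in \R $ write
\begin{align*}
 z_{x}:=x+B_{t}-\argsh \bigl( e^{B_{t}}\sinh x+\beta (A_{t})\bigr) ,
\end{align*}
so that the identity recalled from \cite{har22} reads $\{ \tr _{z_{x}}(B)(s)\} _{0\le s\le t}\eqd \{ B_{s}\} _{0\le s\le t}$ for every $x\in \R $. One is tempted to pick $x$ with $z_{x}=2B_{t}$, which forces $e^{x}=-\beta (A_{t})/\sinh B_{t}$; since this is solvable only on the event $\{ \beta (A_{t})\sinh B_{t}<0\} $, a direct substitution is unavailable and we pass to the limit $x\to -\infty $ instead.

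The first step, and the only delicate one, is to show that $z_{x}\to 2B_{t}$ almost surely as $x\to -\infty $. Since $A_{t}\in (0,\infty )$ a.s., both $a:=e^{B_{t}}>0$ and $c:=\beta (A_{t})$ are finite a.s., and $\sinh x\sim -\tfrac12 e^{-x}$ as $x\to -\infty $, so $a\sinh x+c\to -\infty $ and $2|a\sinh x+c|\sim a\,e^{-x}$. From $\argsh u=-\argsh |u|=-\log \bigl( |u|+\sqrt{1+u^{2}}\bigr) $ one gets the asymptotics $\argsh u=-\log (2|u|)+o(1)$ as $u\to -\infty $, hence
\begin{align*}
 \argsh \bigl( a\sinh x+c\bigr) =-\log \bigl( a\,e^{-x}\bigr) +o(1)=x-\log a+o(1),\qquad x\to -\infty .
\end{align*}
Thus the two divergent contributions $x$ and $-\argsh (a\sinh x+c)$ in $z_{x}=x+B_{t}-\argsh (a\sinh x+c)$ cancel, leaving $z_{x}\to 2\log a=2B_{t}$ a.s. It is precisely this \emph{exact} cancellation, not merely of leading orders, that pins down the particular exponent $2\phi _{t}$ defining $\ct $ in \eqref{;defct}.

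It remains to carry this convergence through the transformations. For each fixed $\phi \in C([0,t];\R )$, the map $z\mapsto \tr _{z}(\phi )$ is continuous from $\R $ into $C([0,t];\R )$: since $0\le A_{s}(\phi )/A_{t}(\phi )\le 1$, the quantity $1+\frac{A_{s}(\phi )}{A_{t}(\phi )}(e^{z}-1)$ stays bounded below by $\min (1,e^{z})>0$ and depends continuously on $z$, uniformly in $s\in [0,t]$, and hence so does the right-hand side of \eqref{;ttrans}. Taking $\phi =B$ and combining with the previous step yields, as $x\to -\infty $,
\begin{align*}
 \{ \tr _{z_{x}}(B)(s)\} _{0\le s\le t}\longrightarrow \{ \tr _{2B_{t}}(B)(s)\} _{0\le s\le t}=\{ \ct (B)(s)\} _{0\le s\le t}
\end{align*}
almost surely in $C([0,t];\R )$. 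Finally, take $x=-n$ with $n\to \infty $: by \eqref{;ttrans1} each member of the sequence has the law of $\{ B_{s}\} _{0\le s\le t}$, the sequence converges a.s.\ and therefore in law, so the limit $\{ \ct (B)(s)\} _{0\le s\le t}$ has the law of $\{ B_{s}\} _{0\le s\le t}$, which is the assertion. As a consistency check, note that $\ct (B)(t)=\tr _{2B_{t}}(B)(t)=B_{t}-\log \bigl( e^{2B_{t}}\bigr) =-B_{t}$, matching the endpoint of the time reversal $\{ B_{t-s}-B_{t}\} _{0\le s\le t}$ and in keeping with the compatibility with time reversal mentioned in the abstract.
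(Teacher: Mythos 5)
Your argument is correct, and it takes a genuinely different route from both proofs in the paper. You treat the invariance \eqref{;ttrans1} recalled from \cite{har22} as a black box and obtain $\ct $ as a degenerate limit: the asymptotics $\argsh u=-\log (2|u|)+o(1)$ as $u\to -\infty $ do give $x+B_{t}-\argsh \bigl( e^{B_{t}}\sinh x+\beta (A_{t})\bigr) \to 2B_{t}$ a.s.\ (the auxiliary variable $\beta (A_{t})$ is swallowed by the dominant term $e^{B_{t}}\sinh x$), your lower bound $\min (1,e^{z})$ legitimately gives local uniform continuity of $z\mapsto \tr _{z}(\phi )$ in the sup norm, and passing from a.s.\ convergence of a sequence of processes each distributed as Brownian motion to the identification of the limit law is sound since $C([0,t];\R )$ is Polish. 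By contrast, the paper re-derives the theorem from more primitive inputs: its first proof uses the absolute continuity relation of \cite[Theorem~1.2]{har22} to establish a bridge-level identity \eqref{;tbb} ($\tr _{-2x}(b^{-x})\eqd b^{x}$), which is stronger than the theorem and also yields \cref{;cmain} directly by integrating against drifted Gaussian weights; its second proof uses the Matsumoto--Yor conditional symmetry of $B_{t}$ given $\Z _{t}$ (\lref{;lcsym}) together with the $1/A_{s}$ representation, which simultaneously establishes the equivalence with \tref{;tmaind}. Your limiting argument is shorter and conceptually illuminating (it explains why the exponent must be exactly $2\phi _{t}$), but it leans on the full strength of the $\beta $-dependent invariance and does not by itself produce the fixed-endpoint refinement or the restatement in terms of $1/A_{s}$ that the paper's proofs deliver as byproducts.
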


Among other interesting properties, $\ct $ is an 
involution: $\ct \circ \ct =\id $, where $\id$ is the identity map 
on $C([0,t];\R )$; see \pref{;pct}\thetag{iv}. This property is 
consistent with \tref{;tmain} in the sense that, by the theorem,  
$\ct (B)\eqd (\ct \circ \ct )(B)=B$. Moreover, \tref{;tmain} is 
compatible with the time reversal of Brownian motion; 
see \rref{;rcompatible}\thetag{1}.

To facilitate the reader's understanding of the above theorem, 
it would also be informative to restate it in the following form: 

\let\temp\thethm 
\renewcommand{\thethm}{\ref{;tmain}$'$}
\begin{thm}\label{;tmaind}
It holds that 
\begin{align*}
 \left\{ 
 \frac{1}{A_{s}}+\frac{e^{2B_{t}}}{A_{t}}-\frac{1}{A_{t}}
 \right\} _{0<s\le t}
 \eqd 
 \left\{ 
 \frac{1}{A_{s}}
 \right\} _{0<s\le t}.
\end{align*}
\end{thm}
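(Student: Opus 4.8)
The plan is to read off \tref{;tmaind} from \tref{;tmain} by determining how the exponential functional $A_\bullet$ transforms under $\ct$. Everything hinges on the deterministic identity
\[
 \frac{1}{A_s\bigl( \ct (\phi )\bigr) }=\frac{1}{A_s(\phi )}+\frac{e^{2\phi _t}-1}{A_t(\phi )},\qquad 0<s\le t,
\]
which I claim holds for every $\phi \in C([0,t];\R )$; granting it, \tref{;tmaind} will follow in a couple of lines.

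To establish the identity I would begin from \eqref{;ttrans} and \eqref{;defct}, which give
\[
 e^{2\ct (\phi )(u)}=e^{2\phi _u}\left\{ 1+\frac{A_u(\phi )}{A_t(\phi )}\bigl( e^{2\phi _t}-1\bigr) \right\} ^{-2},\qquad 0\le u\le t,
\]
and substitute this into $A_s(\ct (\phi ))=\int _0^s e^{2\ct (\phi )(u)}\,du$. The change of variables $v=A_u(\phi )$ (legitimate since $A_\bullet (\phi )$ is a $C^1$ increasing bijection of $[0,s]$ onto $[0,A_s(\phi )]$, with $dv=e^{2\phi _u}\,du$) turns the integral into the elementary $\int _0^{A_s(\phi )}\bigl( 1+\tfrac{v}{A_t(\phi )}(e^{2\phi _t}-1)\bigr) ^{-2}\,dv$, which evaluates to $A_s(\phi )\bigl\{ 1+\tfrac{A_s(\phi )}{A_t(\phi )}(e^{2\phi _t}-1)\bigr\} ^{-1}$; taking reciprocals yields the displayed identity. (When $\phi _t=0$ one has $\ct (\phi )=\phi $ and the identity is trivially true; otherwise all denominators appearing are nonzero, and the quantity $1+\tfrac{A_s(\phi )}{A_t(\phi )}(e^{2\phi _t}-1)$ is strictly positive because $A_s(\phi )/A_t(\phi )\in [0,1]$, which is precisely what makes $\ct (\phi )$ and the logarithm in \eqref{;ttrans} well defined.)

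With $\phi =B$ the identity reads
\[
 \frac{1}{A_s\bigl( \ct (B)\bigr) }=\frac{1}{A_s}+\frac{e^{2B_t}}{A_t}-\frac{1}{A_t}\qquad \text{a.s., for }0<s\le t.
\]
Since $\phi \mapsto \{ 1/A_s(\phi )\} _{0<s\le t}$ is a measurable functional on $C([0,t];\R )$, applying it to both sides of the identity in law furnished by \tref{;tmain} gives $\bigl\{ 1/A_s(\ct (B))\bigr\} _{0<s\le t}\eqd \{ 1/A_s\} _{0<s\le t}$, and the left-hand side equals $\{ 1/A_s+e^{2B_t}/A_t-1/A_t\} _{0<s\le t}$ by the previous display; this is exactly \tref{;tmaind}. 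I would also note that the reasoning is reversible --- $\phi $ is recovered from $\{ A_s(\phi )\} _{0<s\le t}$ by differentiation --- so \tsref{;tmain} and \ref{;tmaind} are equivalent reformulations, which is the point of stating the theorem in this second form.

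There is no genuine obstacle in this argument: its entire content is the change-of-variables identity above, a one-line computation once one notices the substitution $v=A_u(\phi )$. The only item calling for a moment's care is the positivity of $1+\tfrac{A_s(\phi )}{A_t(\phi )}(e^{2\phi _t}-1)$, which guarantees that $\ct (\phi )$ and the logarithms in play make sense; this is already built into the framework of \cite{har22} and so requires nothing new here.
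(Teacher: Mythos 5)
Your argument is correct and is essentially the paper's own route: Theorem~\ref{;tmaind} is obtained from Theorem~\ref{;tmain} via the deterministic identity $1/A_{s}(\ct (\phi ))=1/A_{s}(\phi )+(e^{2\phi _{t}}-1)/A_{t}(\phi )$, which is exactly Proposition~\ref{;pct}\thetag{ii}. The only difference is that you verify that identity by a direct change-of-variables computation, whereas the paper gets it by taking $z=2\phi _{t}$ in Lemma~\ref{;lttrans}\thetag{ii} (cited from \cite{har22}); your computation is a valid self-contained substitute.
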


\let\thetheorem\temp 
\addtocounter{thm}{-1}

The expression of the left-hand side is due to \pref{;pct}\thetag{ii}. 
Notice that, thanks to the time reversal of Brownian motion,
\begin{align}\label{;qrev}
 \frac{e^{2B_{t}}}{A_{t}}\eqd \frac{1}{A_{t}};
\end{align}
see \rref{;rrev}. From the above identity in law, we see in particular that  
\begin{align*}
 \ex \!\left[ 
 \frac{e^{2B_{t}}}{A_{t}}-\frac{1}{A_{t}}
 \right] =0,
\end{align*} 
which is consistent with \tref{;tmaind}. 
As for the integrability of the random variables $1/A_{s},\,s>0$, 
we refer the reader to \cite{dmy00}. (In fact, it is true that  
$\ex \!\left[ 
\exp \left\{ \theta /(2A_{s})\right\} 
\right] <\infty $ for all $\theta <1$, whenever $s>0$; see, e.g., 
\cite[Lemma~4.2]{har22}.) We also note that identity \eqref{;qrev} 
is a particular case of \tref{;tmaind} evaluated at $s=t$.

As a corollary to \tref{;tmain}, by noting that $\ct (B)(t)=-B_{t}$ 
(see \pref{;pct}\thetag{i}), the Cameron--Martin formula 
immediately entails the following relation between Brownian motions 
with opposite drifts: 
\begin{align}\label{;opp}
 \bigl\{ \ct (\db{-\mu })(s)\bigr\} _{0\le s\le t}
 \eqd \bigl\{ \db{\mu }_{s}\bigr\} _{0\le s\le t}
\end{align}
for every $\mu \in \R $. Thanks to the fact that $\ct $ is an 
involution, we may put the above identity in law into a more 
symmetric form as stated in the corollary below.

\begin{cor}\label{;cmain}
 For every $\mu \in \R $, it holds that 
\begin{align*}
 \left\{ 
 \left( 
 \ct (\db{-\mu })(s),\,\db{-\mu }_{s}
 \right) 
 \right\} _{0\le s\le t}\eqd 
 \left\{ 
 \left( 
 \db{\mu }_{s},\,\ct (\db{\mu })(s)
 \right) 
 \right\} _{0\le s\le t}.
\end{align*}
\end{cor}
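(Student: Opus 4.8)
\textbf{Proof plan for \cref{;cmain}.}
The plan is to deduce \cref{;cmain} from the identity in law \eqref{;opp} together with the involutivity $\ct \circ \ct =\id $ of \pref{;pct}\thetag{iv}, with no further probabilistic input. Since \eqref{;opp} is valid for \emph{every} real $\mu $ — it follows at once from \tref{;tmain} via the Cameron--Martin formula — I would first replace $\mu $ by $-\mu $ in it to record the identity in law of $C([0,t];\R )$-valued processes
\begin{align*}
 \bigl\{ \ct (\db{\mu })(s)\bigr\} _{0\le s\le t}
 \eqd \bigl\{ \db{-\mu }_{s}\bigr\} _{0\le s\le t}.
\end{align*}

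The second step is to push this identity forward by the map $\Psi \colon C([0,t];\R )\to C([0,t];\R )^{2}$ given by $\Psi (\phi ):=\bigl( \ct (\phi ),\phi \bigr) $. Because $A_{t}(\phi )>0$ for every $\phi $, and because both $\phi \mapsto \{ A_{s}(\phi )\} _{0\le s\le t}$ and $\phi \mapsto \phi _{t}$ are continuous for the uniform topology, the defining formulas \eqref{;ttrans} and \eqref{;defct} show that $\ct $, and hence $\Psi $, is continuous, in particular Borel measurable; and applying a measurable map to both sides of an identity in law preserves it. Thus, applying $\Psi $ to the two sides of the displayed identity and using $\ct \circ \ct =\id $ on the left, I obtain
\begin{align*}
 \bigl( \db{\mu },\,\ct (\db{\mu })\bigr)
 =\Psi \bigl( \ct (\db{\mu })\bigr)
 \eqd \Psi \bigl( \db{-\mu }\bigr)
 =\bigl( \ct (\db{-\mu }),\,\db{-\mu }\bigr) ,
\end{align*}
which is exactly the assertion of \cref{;cmain}.

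I do not expect a genuine obstacle here: once \eqref{;opp} and \pref{;pct}\thetag{iv} are in hand the computation is purely formal. The only point needing (entirely routine) verification is the continuity — equivalently, the measurability — of $\ct $ on $C([0,t];\R )$, which is immediate from its explicit definition together with the strict positivity of $A_{t}(\cdot )$, the latter keeping the expression inside the logarithm in \eqref{;ttrans} bounded away from $0$.
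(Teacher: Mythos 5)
Your proposal is correct and is essentially the paper's own argument: both deduce \cref{;cmain} from \eqref{;opp} together with $\ct \circ \ct =\id $ by pushing the one-dimensional identity in law through the measurable map $\phi \mapsto \bigl( \ct (\phi ),\phi \bigr) $ (the paper equivalently applies $\phi \mapsto \bigl( \phi ,\ct (\phi )\bigr) $ to \eqref{;opp} as stated, while you first replace $\mu $ by $-\mu $, a purely cosmetic difference). Your explicit remark on the continuity, hence measurability, of $\ct $ is a routine point the paper leaves implicit.
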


Let $\mu >0$ and set 
$\da{-\mu }_{\infty }:=\lim _{t\to \infty }\da{-\mu }_{t}$. 
Dufresne's identity in law  \cite[Proposition~4.4.4(b)]{duf} asserts that 
\begin{align}\label{;duf}
 \da{-\mu }_{\infty }
 &\eqd \frac{1}{2\ga _{\mu }},
\end{align}
where $\ga _{\mu }$ is a gamma random variable 
with parameter $\mu $:   
\begin{align*}
 \pr (\ga _{\mu }\in dx)
 &=\frac{1}{\Gamma (\mu )}x^{\mu -1}e^{-x}\,dx,\quad x>0. 
\end{align*}
Here $\Gamma (\cdot )$ is the gamma function.
Recall from Donati-Martin--Matsumoto--Yor \cite{dmy} 
the family $\{ T_{\al }\} _{\al \ge 0}$ of (non-anticipative) path 
transformations on $C([0,\infty );\R )$ defined by 
\begin{align}\label{;tra}
 T_{\al }(\phi )(s):=\phi _{s}-\log \left\{ 
 1+\al A_{s}(\phi )
 \right\} ,\quad s\ge 0,\ \phi \in C([0,\infty );\R ).
\end{align}
Another distributional relationship between 
$\db{\mu }$ and $\db{-\mu }$ is shown in 
Matsumoto--Yor \cite{myPIII}, which, in terms of the above 
notation, is stated as 

\begin{prop}[{\cite[Theorem~2.3]{myPIII}}]\label{;poppdg}
Suppose that $\mu >0$. Then it holds that 
\begin{align}\label{;oppdg}
 \left\{ 
 \left( 
 \db{-\mu }_{s}-\log \Biggl( 
 1-\frac{\da{-\mu }_{s}}{\da{-\mu }_{\infty }}
 \Biggr) ,\,\db{-\mu }_{s}
 \right) 
 \right\} _{s\ge 0}
 \eqd \left\{ 
 \left( 
 \db{\mu }_{s},\,T_{2\ga _{\mu }}(\db{\mu })(s)
 \right) 
 \right\} _{s\ge 0},
\end{align}
where, on the right-hand side, $\ga _{\mu }$ is independent of $B$.
\end{prop}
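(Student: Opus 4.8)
The plan is to deduce \pref{;poppdg} from \cref{;cmain} by letting the time horizon tend to infinity; this also exhibits the proposition as the $t\to\infty$ limit of \cref{;cmain}. Fix $u>0$ arbitrarily. For every $t>u$, the identity in law of \cref{;cmain}, restricted to $s\in[0,u]$, reads
\begin{align*}
 \bigl\{ \bigl( \ct^{t}(\db{-\mu})(s),\,\db{-\mu}_{s}\bigr) \bigr\} _{0\le s\le u}
 \eqd \bigl\{ \bigl( \db{\mu}_{s},\,\ct^{t}(\db{\mu})(s)\bigr) \bigr\} _{0\le s\le u},
\end{align*}
and I would pass to the limit $t\to\infty$ on both sides, using throughout the explicit formula $\ct^{t}(\phi)(s)=\phi_{s}-\log\bigl\{ 1+(A_{s}(\phi)/A_{t}(\phi))(e^{2\phi_{t}}-1)\bigr\} $ obtained from \eqref{;defct} and \eqref{;ttrans}.

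On the left-hand side, since $\mu>0$ one has $\db{-\mu}_{t}\to-\infty$ (hence $e^{2\db{-\mu}_{t}}\to0$) and $A_{t}(\db{-\mu})=\da{-\mu}_{t}\uparrow\da{-\mu}_{\infty}<\infty$ almost surely; substituting these into the formula above yields, uniformly for $s\in[0,u]$ by the monotonicity of $s\mapsto\da{-\mu}_{s}$,
\begin{align*}
 \ct^{t}(\db{-\mu})(s)\longrightarrow\db{-\mu}_{s}-\log\Bigl( 1-\frac{\da{-\mu}_{s}}{\da{-\mu}_{\infty}}\Bigr) \quad\text{a.s.}
\end{align*}
Thus the left-hand side converges almost surely, in $C([0,u];\R^{2})$, to the left-hand side of \eqref{;oppdg}.

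On the right-hand side the first coordinate is untouched, while for the second, setting $Y_{t}:=e^{-2\db{\mu}_{t}}\da{\mu}_{t}$, I would rewrite
\begin{align*}
 \ct^{t}(\db{\mu})(s)=\db{\mu}_{s}-\log\Bigl\{ 1+\frac{\da{\mu}_{s}}{Y_{t}}-\frac{\da{\mu}_{s}}{\da{\mu}_{t}}\Bigr\} .
\end{align*}
Since $\db{\mu}_{t}\to+\infty$ we have $\da{\mu}_{t}\to\infty$, so $\da{\mu}_{s}/\da{\mu}_{t}\to0$; the crucial point is that $Y_{t}$ converges in law, jointly with and asymptotically independently of $\{ \db{\mu}_{v}\} _{0\le v\le u}$, to $1/(2\ga_{\mu})$. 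To see this I would use the decomposition $Y_{t}=e^{-2(\db{\mu}_{t}-\db{\mu}_{u})}\bigl( e^{-2\db{\mu}_{u}}\da{\mu}_{u}\bigr) +\int_{u}^{t}e^{2(\db{\mu}_{v}-\db{\mu}_{t})}\,dv$: the first term tends to $0$ almost surely, while the second depends only on the increments of $B$ after time $u$, is therefore independent of $\{ \db{\mu}_{v}\} _{0\le v\le u}$, and, by the time reversal of Brownian motion $\{ B_{t-w}-B_{t}\} _{0\le w\le t-u}\eqd\{ B_{w}\} _{0\le w\le t-u}$, equals in law $\int_{0}^{t-u}e^{2\db{-\mu}_{w}}\,dw=\da{-\mu}_{t-u}$, which increases to $\da{-\mu}_{\infty}\eqd1/(2\ga_{\mu})$ by Dufresne's identity \eqref{;duf}. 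Slutsky's lemma and the continuous mapping theorem then give that the right-hand side converges in law, in $C([0,u];\R^{2})$, to $\bigl\{ \bigl( \db{\mu}_{s},\,\db{\mu}_{s}-\log(1+2\ga_{\mu}\da{\mu}_{s})\bigr) \bigr\} _{0\le s\le u}=\bigl\{ \bigl( \db{\mu}_{s},\,T_{2\ga_{\mu}}(\db{\mu})(s)\bigr) \bigr\} _{0\le s\le u}$ with $\ga_{\mu}$ independent of $\{ B_{v}\} _{0\le v\le u}$. Equating the two weak limits and then letting $u\to\infty$ (with a routine consistency-in-$u$ argument) yields \eqref{;oppdg}.

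The main obstacle is precisely the right-hand side: unlike $\ct^{t}(\db{-\mu})$, the process $\ct^{t}(\db{\mu})$ does not converge almost surely, because $Y_{t}$ — equivalently $e^{2\db{\mu}_{t}}/\da{\mu}_{t}$ — solves a positive-recurrent one-dimensional stochastic differential equation whose invariant law is precisely that of $1/(2\ga_{\mu})$, so it keeps oscillating. One must therefore argue at the level of convergence in law, and the heart of the matter is to read off from the increment decomposition both the correct limiting distribution (via Dufresne) and its independence from the already-revealed part of the path. The remaining points — the uniform-in-$s$ almost-sure convergence on the left, and transporting an equality in law through weak limits — are routine.
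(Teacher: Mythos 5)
Your proposal is correct and takes essentially the same route as the paper's proof: fix $u>0$, apply \cref{;cmain} on $[0,t]$ restricted to $[0,u]$, let $t\to \infty $ using $e^{2\db{-\mu }_{t}}\to 0$ and $\da{-\mu }_{t}\to \da{-\mu }_{\infty }$ a.s.\ on one side, and on the other side a post-$u$ increment decomposition combined with time reversal and Dufresne's identity \eqref{;duf} to get joint convergence in law to an independent $2\ga _{\mu }$. Indeed, your term $\int _{u}^{t}e^{2(\db{\mu }_{v}-\db{\mu }_{t})}\,dv$ is exactly the paper's $e^{-2X_{t-u}}A_{t-u}(X)$ with $X_{v}=\db{\mu }_{v+u}-\db{\mu }_{u}$, so the two arguments coincide in substance.
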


In \sref{;sppinv}, we see that relation \eqref{;oppdg} may be deduced 
from \cref{;cmain}. Combining the above proposition with \cref{;cmain}, 
we also obtain the following joint invariance of the law of 
Brownian motion with drift in the presence of an independent 
gamma random variable.
\begin{prop}\label{;pinv}
 Let $\mu >0$ and suppose that $\ga _{\mu }$ is independent 
 of $B$. Fix $t>0$.
 \begin{itemize}
  \item[\thetag{1}] 
  Denote 
  $\tr _{\log \{ e^{2\db{\mu }_{t}}\!/(1+2\ga _{\mu }\da{\mu }_{t})\} }(\db{\mu })$ 
  by $X^{1}$: 
  \begin{align*}
    X^{1}_{s}:=\db{\mu }_{s}-\log \left\{ 
   1+\frac{\da{\mu }_{s}}{\da{\mu }_{t}}\left( 
   \frac{e^{2\db{\mu }_{t}}}{1+2\ga _{\mu }\da{\mu }_{t}}-1
   \right) 
   \right\} ,\quad 0\le s\le t.
  \end{align*}
  Then it holds that 
  \begin{align}\label{;pinv1}
   \left\{ 
   (X^{1}_{s},\,\db{\mu }_{s})
   \right\} _{0\le s\le t}\eqd 
   \left\{ 
   (\db{\mu }_{s},\,X^{1}_{s})
   \right\} _{0\le s\le t}. 
  \end{align}

 \item[\thetag{2}] 
 Denote 
 $
 \tr _{\log \{ e^{2\db{-\mu }_{t}}+2\ga _{\mu }\da{-\mu }_{t}\} }(\db{-\mu })
 $
 by $X^{2}$:  
  \begin{align*}
   X^{2}_{s}:=\db{-\mu }_{s}-\log \left\{ 
   1+\frac{\da{-\mu }_{s}}{\da{-\mu }_{t}}\left( 
   e^{2\db{-\mu }_{t}}+2\ga _{\mu }\da{-\mu }_{t}-1
   \right) 
   \right\} ,\quad 0\le s\le t.
  \end{align*}
  Then it holds that 
  \begin{align}\label{;pinv2}
   \left\{ 
   (X^{2}_{s},\,\db{-\mu }_{s})
   \right\} _{0\le s\le t}\eqd 
   \left\{ 
   (\db{-\mu }_{s},\,X^{2}_{s})
   \right\} _{0\le s\le t}. 
  \end{align}
 \end{itemize}
\end{prop}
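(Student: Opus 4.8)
The plan is to reduce both identities to a combination of the Matsumoto--Yor relation \eqref{;oppdg} (\pref{;poppdg}), relation \eqref{;opp} (equivalently \cref{;cmain}), and Dufresne's identity \eqref{;duf}. First I would record some elementary properties of the path transformations: for $\phi \in C([0,t];\R )$ and $z\in \R $ one reads off from \eqref{;ttrans} that $\tr _{z}(\phi)(t)=\phi _{t}-z$ and $A_{t}(\tr _{z}(\phi))=e^{-z}A_{t}(\phi)$, a short computation gives the composition rule $\tr _{w}\circ \tr _{z}=\tr _{z+w}$ on $C([0,t];\R )$, and comparing \eqref{;tra} with \eqref{;ttrans} shows $T_{\al }(\phi)(s)=\tr _{\log (1+\al A_{t}(\phi))}(\phi)(s)$ for $0\le s\le t$, provided $1+\al A_{s}(\phi)>0$ there. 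These combine to give, on $[0,t]$,
\begin{align*}
 \ct \bigl(T_{\al }(\phi)\bigr)=\tr _{2\phi _{t}-\log (1+\al A_{t}(\phi))}(\phi),\qquad T_{\al }\bigl(\ct (\phi)\bigr)=\tr _{\log (e^{2\phi _{t}}+\al A_{t}(\phi))}(\phi);
\end{align*}
in particular $X^{1}=\ct \bigl(T_{2\ga _{\mu }}(\db{\mu })\bigr)$ and $X^{2}=T_{2\ga _{\mu }}\bigl(\ct (\db{-\mu })\bigr)$. Writing $V:=T_{2\ga _{\mu }}(\db{\mu })$, so that $X^{1}=\ct (V)$, I claim that \eqref{;pinv1} and \eqref{;pinv2} both follow once one shows
\begin{align*}
 \{(X^{2}_{s},\db{-\mu }_{s})\} _{0\le s\le t}\eqd \{(V_{s},\ct (\db{\mu })(s))\} _{0\le s\le t}\eqd \{(\db{-\mu }_{s},X^{2}_{s})\} _{0\le s\le t}.
\end{align*}

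The first of these identities I would obtain from \eqref{;opp}: since $\ga _{\mu }$ is independent of $B$, it is independent of each of the processes $\{\ct (\db{-\mu })(s)\} _{0\le s\le t}$ and $\{\db{\mu }_{s}\} _{0\le s\le t}$, which are identical in law by \eqref{;opp}; hence the pairs $\bigl(\{\ct (\db{-\mu })(s)\} _{0\le s\le t},\ga _{\mu }\bigr)$ and $\bigl(\{\db{\mu }_{s}\} _{0\le s\le t},\ga _{\mu }\bigr)$ have the same law. Applying the measurable map $(\phi ,g)\mapsto \bigl(T_{2g}(\phi),\ct (\phi)\bigr)$ to both, and using $\ct \circ \ct =\id $ (\pref{;pct}\thetag{iv}) to identify $\ct (\ct (\db{-\mu }))=\db{-\mu }$ on $[0,t]$, produces exactly $\{(X^{2}_{s},\db{-\mu }_{s})\} _{0\le s\le t}\eqd \{(V_{s},\ct (\db{\mu })(s))\} _{0\le s\le t}$.

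For the second identity I would start from \eqref{;oppdg} restricted to $[0,t]$ and apply the map $(\phi ,\psi )\mapsto (\psi ,\ct (\phi))$ to both sides, which gives $\{(V_{s},\ct (\db{\mu })(s))\} _{0\le s\le t}\eqd \{(\db{-\mu }_{s},\ct (T_{-1/\da{-\mu }_{\infty }}(\db{-\mu }))(s))\} _{0\le s\le t}$. The crucial point is to simplify the second component on the right. Using the identities from the first paragraph with $\al =-1/\da{-\mu }_{\infty }$ (legitimate because $\da{-\mu }_{s}\le \da{-\mu }_{t}<\da{-\mu }_{\infty }$), one finds, a.s. on $[0,t]$,
\begin{align*}
 \ct \bigl(T_{-1/\da{-\mu }_{\infty }}(\db{-\mu })\bigr)=T_{2\hbe }\bigl(\ct (\db{-\mu })\bigr),\qquad \hbe :=\frac{e^{2\db{-\mu }_{t}}}{2\bigl(\da{-\mu }_{\infty }-\da{-\mu }_{t}\bigr)}.
\end{align*}
Since $\da{-\mu }_{\infty }-\da{-\mu }_{t}=\int _{t}^{\infty }e^{2\db{-\mu }_{s}}\,ds=e^{2\db{-\mu }_{t}}\int _{0}^{\infty }e^{2(\db{-\mu }_{t+u}-\db{-\mu }_{t})}\,du$, we have $\hbe =\bigl(2\int _{0}^{\infty }e^{2(\db{-\mu }_{t+u}-\db{-\mu }_{t})}\,du\bigr)^{-1}$; as $\{\db{-\mu }_{t+u}-\db{-\mu }_{t}\} _{u\ge 0}$ is a Brownian motion with drift $-\mu $ that is independent of $\{\db{-\mu }_{s}\} _{0\le s\le t}$, Dufresne's identity \eqref{;duf} shows that $\hbe $ is a gamma random variable with parameter $\mu $, independent of $\{\db{-\mu }_{s}\} _{0\le s\le t}$. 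Replacing $\hbe $ by an independent copy of $\ga _{\mu }$ then converts the right-hand side above into $\{(\db{-\mu }_{s},X^{2}_{s})\} _{0\le s\le t}$, giving the second identity.

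Once both identities are in hand, \eqref{;pinv2} is immediate; furthermore the two identities together show that the pair $\{(V_{s},\ct (\db{\mu })(s))\} _{0\le s\le t}$ is exchangeable, and applying the map $(\phi ,\psi )\mapsto (\ct (\phi),\ct (\psi ))$, using $\ct \circ \ct =\id $ and $X^{1}=\ct (V)$, delivers \eqref{;pinv1}. I expect the main obstacle to be the pathwise identity $\ct (T_{-1/\da{-\mu }_{\infty }}(\db{-\mu }))=T_{2\hbe }(\ct (\db{-\mu }))$ on $[0,t]$ together with the identification of $\hbe $: one has to see that the anticipating factor $1/\da{-\mu }_{\infty }$ in \eqref{;oppdg}, which depends on the entire trajectory of $\db{-\mu }$, reorganizes---once composed with the involution $\ct $---into precisely the independent gamma factor produced by applying Dufresne's identity to the part of the path after time $t$. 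The remaining steps are routine substitutions into \eqref{;ttrans} and \eqref{;tra}.
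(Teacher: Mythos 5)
Your argument is correct and uses essentially the same ingredients and strategy as the paper: the composition identities you derive are exactly \lref{;lexpr}, your conversion of the anticipating factor $1/\da{-\mu }_{\infty }$ in \pref{;poppdg} into an independent gamma variable via the Markov property and Dufresne's identity is the paper's observation \eqref{;markov}, and the drift flip is carried out through \eqref{;opp}/\cref{;cmain} together with the involution $\ct \circ \ct =\id $. The only (harmless) difference is organizational: you set up a single exchangeability chain for the pair $\bigl( T_{2\ga _{\mu }}(\db{\mu }),\,\ct (\db{\mu })\bigr) $ and deduce \eqref{;pinv1} from it by applying $\ct $ coordinatewise, whereas the paper proves parts (1) and (2) separately from the restatement of \pref{;poppdg} as the identity in law between \eqref{;td1} and \eqref{;td2}.
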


Identity \eqref{;pinv1} suggests that the process 
\begin{align*}
 \log \left\{ 
   1+\frac{\da{\mu }_{s}}{\da{\mu }_{t}}\left( 
   \frac{e^{2\db{\mu }_{t}}}{1+2\ga _{\mu }\da{\mu }_{t}}-1
   \right) 
   \right\} ,\quad 0\le s\le t,
\end{align*}
and hence its derivative with respect to $s$ as well, is symmetric, 
which is of independent interest; a similar 
remark also applies to \eqref{;pinv2}. Here we say that 
a real-valued continuous process $X=\{ X_{s}\} _{0\le s \le t}$ 
over the interval $[0,t]$ is symmetric if $-X\eqd X$.
Moreover, by letting $t\to \infty $ in \eqref{;pinv2}, we see 
in particular that the process  
\begin{align*}
 \db{-\mu }_{s}-\log \left\{ 
 1+\da{-\mu }_{s}\left( 
 2\ga _{\mu }-\frac{1}{\da{-\mu }_{\infty }}
 \right) 
 \right\} ,\quad s\ge 0,
\end{align*}
is identical in law with $\db{-\mu }$, which recovers 
\cite[Proposition~4.1]{har22}. A further remark on \pref{;pinv} is 
forwarded to \rref{;rpinv} after the end of the proof of the proposition.

The rest of the paper is organized as follows. In \sref{;sptrans}, 
we summarize properties of the transformation $\ct $ defined 
by \eqref{;defct}, which are referred to throughout the paper. 
In \sref{;sptmain}, we prove \tref{;tmain} and \cref{;cmain}: 
we give two proofs of \tref{;tmain} in \sssref{;ssp1tmain} and 
\ref{;ssp2tmain} while \cref{;cmain} is proven in \ssref{;sspcmain}. 
In \sref{;sppinv}, after giving a proof of \pref{;poppdg} by means of 
\cref{;cmain}, we prove \pref{;pinv}. In the final section, we provide 
some related results such as extensions of \tref{;tmain} deduced 
from \pref{;pinv}. 

For each $t>0$, we denote by $C([0,t];\R ^{2})$ the space of 
$\R ^{2}$-valued continuous functions over $[0,t]$. We equip 
each of the two spaces $C([0,t];\R )$ and $C([0,t];\R ^{2})$ with 
topology of uniform convergence; real-valued functionals on these 
spaces are said to be measurable if they are Borel-measurable 
with respect to those topologies. In the sequel, unless otherwise specified, 
$t>0$ is fixed and the path transformations 
$\ct $ and $\tr _{z},\,z\in \R $, refer to those on $C([0,t];\R )$ 
defined respectively by \eqref{;defct} and \eqref{;ttrans}.

%%%%%%%%% New Section %%%%%%%%%
\section{Properties of the transformation $\ct $}\label{;sptrans}

In this section, based on those of $\tr _{z},\,z\in \R $, investigated in 
\cite{har22}, we explore properties of the transformation $\ct $. 

Following the notation used in \cite{dmy} 
by Donati-Martin, Matsumoto and Yor, we define the path 
transformation $Z$ by 
\begin{align}\label{;defz}
 Z_{t}(\phi ):=e^{-\phi _{t}}A_{t}(\phi ),\quad t\ge 0,
\end{align}
for $\phi \in C([0,\infty );\R )$; as in the case of the exponential 
additive functional $\{ A_{t}\} _{t\ge 0}$ of $B$, we will simply write 
$Z_{t}$ for $Z_{t}(B)$ for each $t\ge 0$. 
Notice that the two transformations 
$A$ and $Z$ on $C([0,\infty );\R )$ are related via 
\begin{align}\label{;deria}
 \frac{d}{dt}\frac{1}{A_{t}(\phi )}
 =-\left\{ \frac{1}{Z_{t}(\phi )}\right\} ^{2},\quad t>0,
\end{align}
or, equivalently, for an arbitrarily fixed $t>0$, 
\begin{align}\label{;deriad}
 \frac{1}{A_{s}(\phi )}
 =\int _{s}^{t}\frac{du}{
 \left\{ Z_{u}(\phi )\right\} ^{2}}
 +\frac{e^{-\phi _{t}}}{Z_{t}(\phi )},\quad 0<s\le t, 
\end{align}
for any $\phi \in C([0,\infty );\R )$.

Again, we fix $t>0$ from now on, and 
let the path transformations $\ct $ and $\tr _{z},\,z\in \R $, 
be those on $C([0,t];\R )$.
Accordingly, the two transformations $A$ and $Z$ are 
restricted on 
$C([0,t];\R )$. We also consider the time reversal, which we 
denote by $R$, defined by 
\begin{align}\label{;trev}
 R(\phi )(s):=\phi _{t-s}-\phi _{t},\quad 0\le s\le t,
 \ \phi \in C([0,t];\R ).
\end{align}
It is well known that the law of 
$\{ B_{s}\} _{0\le s\le t}$ is invariant under $R$: 
\begin{align}\label{;itrev}
 \left\{ R(B)(s)\right\} _{0\le s\le t}
 \eqd 
 \{ B_{s}\} _{0\le s\le t}.
\end{align}
(Notice that the usual time reversal of Brownian motion 
refers to $\left\{ -R(B)(s)\right\} _{0\le s\le t}$ 
in our notation.) The following properties of 
$\tr _{z},\,z\in \R $, are investigated in \cite{har22}. 

\begin{lem}[{\cite[Proposition~2.1]{har22}}]\label{;lttrans}
We have the following \thetag{i}--\thetag{v}. 
\begin{itemize}
\item[\thetag{i}] For every $z\in \R $ and $\phi \in C([0,t];\R )$,
$\tr _{z}(\phi )(t)=\phi _{t}-z$.

\item[\thetag{ii}] For every $z\in \R $ and $\phi \in C([0,t];\R )$, 
\begin{align*}
 \frac{1}{A_{s}(\tr _{z}(\phi ))}=\frac{1}{A_{s}(\phi )}
 +\frac{e^{z}-1}{A_{t}(\phi )},\quad 0<s\le t;
\end{align*}
in particular, $A_{t}(\tr _{z}(\phi ))=e^{-z}A_{t}(\phi )$.

\item[\thetag{iii}] $Z\circ \tr _{z}=Z$ for any $z\in \R $.

\item[\thetag{iv}] (Semigroup property) 
$\tr _{z}\circ \tr _{z'}=\tr _{z+z'}$ for any $z,z'\in \R $; in particular, 
\begin{align*}
 \tr _{z}\circ \tr _{-z}=\tr _{0}=\id \quad \text{for any $z\in \R $},
\end{align*}
where $\id $ is the identity map on $C([0,t];\R )$
as in \sref{;intro}.

\item[\thetag{v}] For every $z\in \R $, 
$\tr _{z}\circ R\circ \tr _{z}=R$, and hence 
\begin{align*}
 R\circ \tr _{z}=\tr _{-z}\circ R.
\end{align*}

\end{itemize}
\end{lem}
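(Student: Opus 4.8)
The plan is to prove \thetag{i}--\thetag{v} by direct computation, exploiting at every step the fact that the logarithmic correction in \eqref{;ttrans} is designed precisely so that the exponential functional transforms in the simplest possible way. Throughout I would fix $\phi \in C([0,t];\R )$, abbreviate $A_u:=A_u(\phi )$, dispose of the trivial case $z=0$ (where $\tr _z=\tr _0=\id $) first, and for $z\ne 0$ set
\begin{align*}
 g_u:=1+\frac{A_u}{A_t}\left( e^{z}-1\right) ,\qquad 0\le u\le t ,
\end{align*}
so that $\tr _z(\phi )(u)=\phi _u-\log g_u$ with $g_0=1$ and $g_t=e^{z}$. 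Then \thetag{i} follows at once upon setting $u=t$.

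For \thetag{ii} I would note that $e^{2\tr _z(\phi )_u}=e^{2\phi _u}/g_u^{2}$, while $g_u'=\{ (e^{z}-1)/A_t\} e^{2\phi _u}$; thus the integrand defining $A_s(\tr _z(\phi ))$ is an exact differential, $e^{2\phi _u}g_u^{-2}\,du=-\{ A_t/(e^{z}-1)\}\,d(1/g_u)$. Integrating over $[0,s]$ and using $g_0=1$ gives $A_s(\tr _z(\phi ))=A_s/g_s$, and inserting the definition of $g_s$ yields the stated formula for $1/A_s(\tr _z(\phi ))$; the case $s=t$ gives $A_t(\tr _z(\phi ))=e^{-z}A_t$. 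Part \thetag{iii} is then immediate: $Z_s(\tr _z(\phi ))=e^{-(\phi _s-\log g_s)}(A_s/g_s)=e^{-\phi _s}A_s=Z_s(\phi )$.

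For the semigroup property \thetag{iv}, I would apply $\tr _z$ to $\psi :=\tr _{z'}(\phi )$, now writing $g_s$ for the factor built from $z'$ (so $\psi _s=\phi _s-\log g_s$). Using \thetag{ii} in the form $A_s(\psi )/A_t(\psi )=e^{z'}A_s/(g_sA_t)$, a short algebraic simplification gives
\begin{align*}
 1+\frac{A_s(\psi )}{A_t(\psi )}\left( e^{z}-1\right)
 &=\frac{1}{g_s}\left\{ 1+\frac{A_s}{A_t}\left( e^{z+z'}-1\right) \right\} ,
\end{align*}
so that in $\tr _z(\psi )(s)=\psi _s-\log \{ 1+(A_s(\psi )/A_t(\psi ))(e^{z}-1)\} $ the two occurrences of $\log g_s$ cancel, leaving $\phi _s-\log \{ 1+(A_s/A_t)(e^{z+z'}-1)\} =\tr _{z+z'}(\phi )(s)$; taking $z'=-z$ gives $\tr _z\circ \tr _{-z}=\tr _0=\id $. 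Finally, for \thetag{v} I would first record that the substitution $v=t-u$ yields $A_s(R(\chi ))=e^{-2\chi _t}\{ A_t(\chi )-A_{t-s}(\chi )\} $ for any $\chi \in C([0,t];\R )$, hence $A_s(R(\chi ))/A_t(R(\chi ))=1-A_{t-s}(\chi )/A_t(\chi )$. Applying this with $\chi =\psi :=\tr _z(\phi )$, rewriting $A_{t-s}(\psi )/A_t(\psi )=e^{z}A_{t-s}/(g_{t-s}A_t)$ via \thetag{ii}, and using $R(\psi )(s)=R(\phi )(s)+z-\log g_{t-s}$ (which itself uses \thetag{i}), the identity $\tr _z(R(\psi ))(s)=R(\phi )(s)$ reduces to $g_{t-s}A_t-(e^{z}-1)A_{t-s}=A_t$, which holds by the definition of $g_{t-s}$. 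This proves $\tr _z\circ R\circ \tr _z=R$; composing on the left with $\tr _{-z}$ and using \thetag{iv} gives $R\circ \tr _z=\tr _{-z}\circ R$.

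The conceptual content is concentrated entirely in \thetag{ii} --- that $\tr _z$ shifts $1/A_\cdot $ by the constant $(e^{z}-1)/A_t$ --- and \thetag{iii}--\thetag{v} are then forced by the way $A_\cdot $ and the time reversal $R$ behave. Consequently, the main obstacle is purely organizational: keeping the several $g$'s and their arguments straight in \thetag{iv}, and in \thetag{v} correctly combining the factor $A_t-A_{t-s}$ produced by reversing the endpoint with the prefactor $e^{-2\chi _t}$ and the formula from \thetag{ii}. No estimates, integrability, or limiting arguments enter.
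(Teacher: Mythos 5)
Your proof is correct, and all five computations check out: the closed form $A_{s}(\tr _{z}(\phi ))=A_{s}(\phi )/g_{s}$ obtained from the exact differential does give \thetag{ii}, and the algebraic cancellations you indicate for \thetag{iv} and \thetag{v} are exactly the right ones (with the cases $z=0$ and the positivity of $g_{u}$ correctly disposed of at the outset). Your route, however, differs from the paper's. The paper does not reprove \thetag{i}, \thetag{ii}, \thetag{v} at all — it cites the direct computation in \cite[Proposition~2.1]{har22}, which is essentially what you have reconstructed — and it proves only \thetag{iii} and \thetag{iv}, doing so not by path-level algebra but through the representation \eqref{;deriad} of $1/A_{s}$ in terms of $Z$: identity \eqref{;deriaz} (a rewriting of \thetag{ii}) is differentiated in $s$ to get \thetag{iii}, and \thetag{iv} is reduced, via \thetag{i}, \thetag{iii} and \eqref{;deriad}, to matching the boundary term $e^{-\phi _{t}+z+z'}/Z_{t}(\phi )$, after which differentiating the equality of the $A_{s}$'s recovers equality of the paths. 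The paper's route buys uniformity with the rest of the arguments (the same $Z$-representation drives \pref{;pct}\thetag{iv} and the second proof of \tref{;tmain}), at the cost of leaning on \thetag{ii} as imported input; your route buys a self-contained verification of all five items, including \thetag{v}, from the single identity $A_{s}(\tr _{z}(\phi ))=A_{s}(\phi )/g_{s}$, with only elementary calculus.
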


The above properties may be verified by a direct computation;  see 
the proof of \cite[Proposition~2.1]{har22} for details. 
Since, among others, property~\thetag{iii} is frequently used in the 
paper, we provide its proof for the reader's convenience, 
together with a proof of \thetag{iv} that slightly differs from the one 
given in \cite{har22}. Notice that, for every $z\in \R $ and 
$\phi \in C([0,t];\R )$, we may rewrite the displayed identity 
in property~\thetag{ii} as 
\begin{align}\label{;deriaz}
 \frac{1}{A_{s}(\tr _{z}(\phi ))}
 =\int _{s}^{t}\frac{du}{
 \left\{ Z_{u}(\phi )\right\} ^{2}}
 +\frac{e^{-\phi _{t}+z}}{Z_{t}(\phi )},\quad 0<s\le t,
\end{align}
because of relation~\eqref{;deria} and the definition~\eqref{;defz} of $Z$.

\begin{proof}[Proofs of \thetag{iii} and \thetag{iv} of \lref{;lttrans}]
\thetag{iii} In view of relation~\eqref{;deria}, taking the derivative 
with respect to $s$ on both sides of \eqref{;deriaz} leads to 
\begin{align*}
 \left\{ Z_{s}(\tr _{z}(\phi ))\right\} ^{-2}
 =\left\{ Z_{s}(\phi )\right\} ^{-2},\quad 0<s\le t,
\end{align*}
which entails the claim by the positivity of $Z$.

\thetag{iv} It suffices to prove that, for each 
$\phi \in C([0,t];\R )$, 
\begin{align}\label{;epp1}
 A_{s}\bigl( (\tr _{z}\circ \tr _{z'})(\phi )\bigr)  
 =A_{s}(\tr _{z+z'}(\phi )),\quad 0\le s\le t; 
\end{align}
indeed, once this identity is proven, then taking the 
derivative with respect to $s$ on both sides verifies the claim. 
To this end, for every $0<s\le t$, successive use 
of property~\thetag{iii} in \eqref{;deriad} yields 
\begin{align*}
 \frac{1}{A_{s}\bigl( (\tr _{z}\circ \tr _{z'})(\phi )\bigr) }
 &=\int _{s}^{t}\frac{du}{\left\{ Z_{u}(\phi )\right\} ^{2}}
 +\frac{e^{-(\tr _{z}\circ \tr _{z'})(\phi )(t)}}{Z_{t}(\phi )}.
\end{align*}
Using property~\thetag{i} twice, we see that 
\begin{align*}
 -(\tr _{z}\circ \tr _{z'})(\phi )(t)&=-\tr _{z'}(\phi )(t)+z\\
 &=-\phi _{t}+z'+z,
\end{align*}
which proves \eqref{;epp1} by relation~\eqref{;deriaz}.
\end{proof}

Using the above lemma, we prove 

\begin{prop}\label{;pct}
The transformation $\ct $ has the following properties. 
\begin{itemize}
 \item[\thetag{i}] For every $\phi \in C([0,t];\R )$, 
 $\ct (\phi )(t)=-\phi _{t}$.
 
 \item[\thetag{ii}] For every $\phi \in C([0,t];\R )$, 
 \begin{align*}
  \frac{1}{A_{s}(\ct (\phi ))}=\frac{1}{A_{s}(\phi )}
  +\frac{e^{2\phi _{t}}-1}{A_{t}(\phi )},\quad 0<s\le t;
 \end{align*}
 in particular, $A_{t}(\ct (\phi ))=e^{-2\phi _{t}}A_{t}(\phi )$.
 
 \item[\thetag{iii}] $Z\circ \ct =Z$.
 
 \item[\thetag{iv}] For every $z\in \R $, 
 $\ct \circ \tr _{z}\circ \ct \circ \tr _{z}=\id $; in particular, 
 by taking $z=0$, 
 \begin{align*}
   \ct \circ \ct =\id .
 \end{align*}
 Moreover, 
 \begin{align}\label{;compt}
  (\ct \circ \tr _{z})(\phi )=\tr _{2\phi _{t}-z}(\phi ) 
 \end{align}
 for any $\phi \in C([0,t];\R )$. 
 
 \item[\thetag{v}] Restricted on the space of $\phi $'s vanishing 
 at the origin, $\ct $ is commutative with $R$; namely,
 for any $\phi \in C([0,t];\R )$ with $\phi _{0}=0$,
 \begin{align*}
   (R\circ \ct )(\phi )=(\ct \circ R)(\phi ).
 \end{align*}
\end{itemize}
\end{prop}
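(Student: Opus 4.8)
The plan is to reduce each of \thetag{i}--\thetag{v} to the corresponding property of $\tr _{z}$ collected in \lref{;lttrans}, using that $\ct (\phi )=\tr _{2\phi _{t}}(\phi )$ by definition. The only point to keep in mind throughout is that, although the identities in \lref{;lttrans} are phrased for a constant parameter $z$, for a \emph{fixed} path $\phi \in C([0,t];\R )$ the quantity $z=2\phi _{t}$ is a genuine constant, so those identities may be invoked with $z$ set equal to $2\phi _{t}$ (and, more generally, to $2\psi _{t}$ for whichever path $\psi $ is being transformed at a given moment).

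For \thetag{i}, setting $z=2\phi _{t}$ in \lref{;lttrans}\thetag{i} gives $\ct (\phi )(t)=\phi _{t}-2\phi _{t}=-\phi _{t}$. For \thetag{ii}, setting $z=2\phi _{t}$ in \lref{;lttrans}\thetag{ii} produces both the displayed formula for $1/A_{s}(\ct (\phi ))$ and the identity $A_{t}(\ct (\phi ))=e^{-2\phi _{t}}A_{t}(\phi )$. For \thetag{iii}, since $Z\circ \tr _{z}=Z$ on $C([0,t];\R )$ for every constant $z$ by \lref{;lttrans}\thetag{iii}, applying this with $z=2\phi _{t}$ yields $Z_{s}(\ct (\phi ))=Z_{s}\bigl( \tr _{2\phi _{t}}(\phi )\bigr) =Z_{s}(\phi )$ for $0\le s\le t$.

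The substantive part is \thetag{iv}. I would first prove \eqref{;compt}: by definition $(\ct \circ \tr _{z})(\phi )=\tr _{2(\tr _{z}(\phi ))(t)}\bigl( \tr _{z}(\phi )\bigr) $, and since $(\tr _{z}(\phi ))(t)=\phi _{t}-z$ by \lref{;lttrans}\thetag{i}, the semigroup property \lref{;lttrans}\thetag{iv} turns the right-hand side into $\tr _{2\phi _{t}-2z}\bigl( \tr _{z}(\phi )\bigr) =\tr _{2\phi _{t}-z}(\phi )$, which is \eqref{;compt}. Applying \eqref{;compt} twice then gives the involution-type relation: writing $\psi :=(\ct \circ \tr _{z})(\phi )=\tr _{2\phi _{t}-z}(\phi )$, we have $\psi _{t}=\phi _{t}-(2\phi _{t}-z)=z-\phi _{t}$ by \lref{;lttrans}\thetag{i}, so that, again by \eqref{;compt} and the semigroup property, $(\ct \circ \tr _{z})(\psi )=\tr _{2\psi _{t}-z}(\psi )=\tr _{z-2\phi _{t}}\bigl( \tr _{2\phi _{t}-z}(\phi )\bigr) =\tr _{0}(\phi )=\phi $; hence $\ct \circ \tr _{z}\circ \ct \circ \tr _{z}=\id $, and taking $z=0$ (so that $\tr _{0}=\id $) specializes this to $\ct \circ \ct =\id $.

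For \thetag{v}, the hypothesis $\phi _{0}=0$ is used precisely to compute the terminal value of the reversed path: $(R(\phi ))(t)=\phi _{0}-\phi _{t}=-\phi _{t}$. Consequently $(\ct \circ R)(\phi )=\tr _{2(R(\phi ))(t)}(R(\phi ))=\tr _{-2\phi _{t}}(R(\phi ))$. On the other hand, by the commutation relation $R\circ \tr _{z}=\tr _{-z}\circ R$ from \lref{;lttrans}\thetag{v}, applied with $z=2\phi _{t}$, we get $(R\circ \ct )(\phi )=R\bigl( \tr _{2\phi _{t}}(\phi )\bigr) =\tr _{-2\phi _{t}}(R(\phi ))$, which matches the previous expression. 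I do not expect a real obstacle: the whole argument is bookkeeping, and the one place demanding care is tracking how the terminal value of a path changes ($\phi _{t}\mapsto \phi _{t}-z$) under each application of $\tr _{z}$ — that is exactly what makes the parameters in \lref{;lttrans}\thetag{iv} cancel in \thetag{iv} and the two "drifts" coincide in \thetag{v}.
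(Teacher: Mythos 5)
Your proof is correct. Parts \thetag{i}--\thetag{iii} and \thetag{v} coincide with the paper's argument: one simply specializes the constant in the relevant items of \lref{;lttrans} to $2\phi _{t}$ (and, for \thetag{v}, uses $R(\phi )(t)=\phi _{0}-\phi _{t}=-\phi _{t}$), with the observation you make explicitly --- that for a fixed path the parameter $2\phi _{t}$ is a genuine constant --- being exactly the justification needed. Where you genuinely depart from the paper is \thetag{iv}, and your route is sound: you prove \eqref{;compt} first, directly from the definition \eqref{;defct} together with \lref{;lttrans}\thetag{i} and the semigroup property, and then obtain $\ct \circ \tr _{z}\circ \ct \circ \tr _{z}=\id $ by applying \eqref{;compt} twice while tracking how the terminal value changes. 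The paper argues in the opposite order: it establishes the fourfold identity first, by showing that $A_{s}$ of the composition agrees with $A_{s}(\phi )$ --- this uses the $Z$-invariance (property \thetag{iii} and \lref{;lttrans}\thetag{iii}) combined with the integral relation \eqref{;deriad}, the terminal value being computed by repeated use of property \thetag{i} --- and only afterwards deduces \eqref{;compt} by writing $(\ct \circ \tr _{z})(\phi )=(\ct \circ \tr _{z})^{-1}(\phi )=(\tr _{-z}\circ \ct )(\phi )$. Your version is the more elementary one: for \thetag{iv} it stays entirely within the algebra of the family $\{ \tr _{z}\} _{z\in \R }$ (items \thetag{i} and \thetag{iv} of \lref{;lttrans}) and never touches the $A$--$Z$ relations \eqref{;deria}--\eqref{;deriad}, whereas the paper's computation illustrates the $Z$-invariance technique that it reuses elsewhere (for instance in the proof of \lref{;lttrans}\thetag{iv} itself and in the second proof of \tref{;tmain}). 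Both arguments are complete; no gap to report.
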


\begin{proof}
In view of the definition~\eqref{;defct} of $\ct $, properties~\thetag{i}, 
\thetag{ii} and \thetag{iii} follow by taking $z=2\phi _{t}$ in \thetag{i}, 
\thetag{ii} and \thetag{iii} of \lref{;lttrans}, respectively.

\thetag{iv} As in the proof of \lref{;lttrans}\thetag{iv}, in order to 
prove the first half of the assertion, it suffices to show that, 
for each $\phi \in C([0,t];\R )$, 
\begin{align}\label{;epp}
 A_{s}\bigl( (\ct \circ \tr _{z}\circ \ct \circ \tr _{z})(\phi )\bigr) 
 =A_{s}(\phi ),\quad 0\le s\le t. 
\end{align}
To this end, repeated use of 
property~\thetag{i} and \lref{;lttrans}\thetag{i} yields 
\begin{align*}
 (\ct \circ \tr _{z}\circ \ct \circ \tr _{z})(\phi )(t)
 &=-(\tr _{z}\circ \ct \circ \tr _{z})(\phi )(t)\\
 &=-(\ct \circ \tr _{z})(\phi )(t)+z\\
 &=\tr _{z}(\phi )(t)+z\\
 &=\phi _{t}, 
\end{align*}
where we used property~\thetag{i} for the first and third lines and 
\lref{;lttrans}\thetag{i} for the second and fourth. Then, 
for every $0<s\le t$, we have, by property~\thetag{iii} and 
\lref{;lttrans}\thetag{iii}, together with relation~\eqref{;deriad}, 
\begin{align*}
 \frac{1}{
 A_{s}\bigl( (\ct \circ \tr _{z}\circ \ct \circ \tr _{z})(\phi )\bigr)
 }
 &=\int _{s}^{t}\frac{du}{
 \left\{ Z_{u}(\phi )\right\} ^{2}}
 +\frac{
 e^{-(\ct \circ \tr _{z}\circ \ct \circ \tr _{z})(\phi )(t)}
 }{Z_{t}(\phi )}\\
 &=\int _{s}^{t}\frac{du}{
 \left\{ Z_{u}(\phi )\right\} ^{2}}
 +\frac{e^{-\phi _{t}}}{Z_{t}(\phi )}, 
\end{align*}
which agrees with $1/A_{s}(\phi )$ by \eqref{;deriad} again,  
proving \eqref{;epp}. To show the latter half, 
by \lref{;lttrans}\thetag{iv}, note that 
\begin{align*}
 (\ct \circ \tr _{z})(\phi )&=(\ct \circ \tr _{z})^{-1}(\phi )\\
 &=(\tr _{-z}\circ \ct )(\phi ),
\end{align*}
which is equal to $\tr _{-z+2\phi _{t}}(\phi )$ by the definition 
\eqref{;defct} of $\ct $ and the semigroup property 
in \lref{;lttrans}\thetag{iv}.

\thetag{v} By taking $z=2\phi _{t}$ in \lref{;lttrans}\thetag{v}, 
we have, for every $\phi \in C([0,t];\R )$, 
\begin{align*}
 (R\circ \ct )(\phi )=\tr _{-2\phi _{t}}(R(\phi )).
\end{align*} 
Since $R(\phi )(t)=\phi _{0}-\phi _{t}$ by the definition~\eqref{;trev} 
of $R$, we may rewrite the right-hand side of the above identity 
as $\tr _{2R(\phi )(t)-2\phi _{0}}(R(\phi ))$, and hence as 
$\ct (R(\phi ))$ when $\phi _{0}=0$.
\end{proof}

We give a remark on property~\thetag{v} in the above proposition.
\begin{rem}\label{;rcompatible}
\thetag{1} Property~\thetag{v} is compatible with \tref{;tmain} and 
the time reversal \eqref{;itrev} of Brownian motion: both $(R\circ \ct )(B)$ 
and $(\ct \circ R)(B)$ are identical in law with $\{ B_{s}\} _{0\le s\le t}$ 
by \tref{;tmain} and \eqref{;itrev}.

\thetag{2} Without the restriction $\phi _{0}=0$ in  
property~\thetag{v}, we have the relation  
\begin{align*}
 (R\circ \ct \circ R\circ R)(\phi )=(\ct \circ R)(\phi ) 
\end{align*}
for any $\phi \in C([0,t];\R )$. To see that, since 
$(R\circ R)(\phi )(s)=\phi _{s}-\phi _{0},\,0\le s\le t$, we have 
$(R\circ R)(\phi )(0)=0$ and $R\circ R\circ R=R$. Thereby 
we apply property~\thetag{v} to $(R\circ R)(\phi )$ to conclude 
that 
\begin{align*}
 (R\circ \ct )\bigl( (R\circ R)(\phi )\bigr) 
 &=(\ct \circ R)\bigl( (R\circ R)(\phi )\bigr) \\
 &=\ct \bigl(  (R\circ R\circ R)(\phi )\bigr) \\ 
 &=\ct (R(\phi )).
\end{align*}
\end{rem}

The composition of the transformations $\ct $ of different 
durations $t$ is also worth mentioning.

\begin{prop}\label{;ptcomp}
For every $u\ge 0$, it holds that, for any $\phi \in C([0,t+u];\R )$, 
\begin{align*}
 \ct ^{t}\bigl( 
 \ct ^{t+u}(\phi )
 \bigr) (s)=\tr ^{t}_{\phi _{t}+\ct ^{t+u}(\phi )(t)}(\phi )(s),\quad 0\le s\le t.
\end{align*}
\end{prop}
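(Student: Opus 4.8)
The plan is to reduce everything to an identity for the functionals $1/A_s$, exactly as was done in the proofs of \lref{;lttrans}\thetag{iv} and \pref{;pct}\thetag{iv}. Write $\psi:=\ct^{t+u}(\phi)\in C([0,t+u];\R)$ and note that $\ct^t$, when applied to a function on $[0,t]$, only depends on its restriction to $[0,t]$; so $\ct^t(\psi)$ means $\ct^t$ applied to $\psi|_{[0,t]}$. It suffices to prove that
\begin{align*}
 A_s\bigl(\ct^t(\psi)\bigr)=A_s\bigl(\tr^t_{\phi_t+\psi(t)}(\phi)\bigr),\quad 0\le s\le t,
\end{align*}
since differentiating in $s$ then gives the pointwise equality of the two paths on $[0,t]$. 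Because both $\ct^t$ and $\tr^t_z$ are built from $\tr$-type transformations on $C([0,t];\R)$, the natural tool is the representation \eqref{;deriad} of $1/A_s$ on $[0,t]$ in terms of $Z$ and the terminal value, together with the fact (\lref{;lttrans}\thetag{iii}, \pref{;pct}\thetag{iii}) that $Z$ is invariant under all these maps.

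The key steps, in order. First, I would record the terminal values. By \pref{;pct}\thetag{i} applied with duration $t$, $\ct^t(\psi)(t)=-\psi(t)=-\psi_t$. On the other side, by \lref{;lttrans}\thetag{i} with duration $t$, $\tr^t_{\phi_t+\psi_t}(\phi)(t)=\phi_t-(\phi_t+\psi_t)=-\psi_t$. So the two sides have the same terminal value at time $t$. Second, I would compute $Z^t_u$ of each side on $[0,t]$: by \pref{;pct}\thetag{iii}, $Z_u(\ct^t(\psi))=Z_u(\psi)$ for $0<u\le t$; and since $\psi=\ct^{t+u}(\phi)$, \pref{;pct}\thetag{iii} with duration $t+u$ gives $Z_u(\psi)=Z_u(\phi)$ for $0<u\le t$ (here we only need it on $[0,t]\subset[0,t+u]$). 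Likewise $Z_u(\tr^t_{\phi_t+\psi_t}(\phi))=Z_u(\phi)$ by \lref{;lttrans}\thetag{iii}. Thus both sides have the same $Z$ on $(0,t]$.

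Third, I would plug these two facts into formula \eqref{;deriad} with the fixed endpoint $t$: for $0<s\le t$,
\begin{align*}
 \frac{1}{A_s(\ct^t(\psi))}
 &=\int_s^t\frac{du}{\{Z_u(\psi)\}^2}+\frac{e^{-\ct^t(\psi)(t)}}{Z_t(\psi)}
 =\int_s^t\frac{du}{\{Z_u(\phi)\}^2}+\frac{e^{\psi_t}}{Z_t(\phi)},
\end{align*}
and the same computation for $\tr^t_{\phi_t+\psi_t}(\phi)$ gives the identical right-hand side, using its terminal value $-\psi_t$ and $Z_t=Z_t(\phi)$. Hence $1/A_s$ agrees for the two sides on $(0,t]$, so $A_s$ agrees on $[0,t]$ (the case $s=0$ being trivial), and differentiating in $s$ yields $\ct^t(\psi)(s)=\tr^t_{\phi_t+\psi_t}(\phi)(s)$ for $0\le s\le t$, which is the claim.

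The only point requiring care — the ``main obstacle,'' though it is minor — is bookkeeping the two different durations: $\ct^{t+u}$ and $Z^{t+u}$ live on $[0,t+u]$ while $\ct^t$, $\tr^t_z$ and the representation \eqref{;deriad} with endpoint $t$ live on $[0,t]$. One must be sure that $Z_u(\phi)$ for $u\in(0,t]$ is unambiguous (it depends only on $\phi|_{[0,t]}$, directly from \eqref{;defz}), that the invariance $Z\circ\ct^{t+u}=Z$ from \pref{;pct}\thetag{iii} may be used at times $u\le t$, and that $\ct^t$ sees only $\psi|_{[0,t]}$. Once this is observed, no genuine computation is needed beyond substituting into \eqref{;deriad}, and the proof is essentially the same three-line argument as in \pref{;pct}\thetag{iv}.
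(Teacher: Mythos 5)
Your proof is correct, but it takes a different route from the paper's. The paper's own argument is purely algebraic and does not pass through $A$ or $Z$ at all: it first observes, directly from the definition \eqref{;ttrans}, that the restriction of $\ct ^{t+u}(\phi )$ to $[0,t]$ is itself a member of the family $\{ \tr ^{t}_{z}\} _{z\in \R }$, namely
\begin{align*}
 \ct ^{t+u}(\phi )(v)=\tr ^{t}_{\phi _{t}-\ct ^{t+u}(\phi )(t)}(\phi )(v),
 \quad 0\le v\le t
\end{align*}
(the value of $z$ being identified by evaluating $\ct ^{t+u}(\phi )$ at $v=t$), and then concludes in one line from the definition \eqref{;defct} of $\ct ^{t}$ and the semigroup property in \lref{;lttrans}~(iv), since $\tr ^{t}_{2\ct ^{t+u}(\phi )(t)}\circ \tr ^{t}_{\phi _{t}-\ct ^{t+u}(\phi )(t)}=\tr ^{t}_{\phi _{t}+\ct ^{t+u}(\phi )(t)}$. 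You instead run the $Z$-based scheme that the paper uses to prove \lref{;lttrans}~(iv) and \pref{;pct}~(iv): match terminal values via property (i), match $Z$ on $(0,t]$ via property (iii) — correctly invoked once at duration $t$ and once at duration $t+u$ restricted to $[0,t]$ — substitute into \eqref{;deriad}, and recover the paths by differentiating $1/A_{s}$. All of these invocations are legitimate, and the duration bookkeeping you single out is indeed the only delicate point and is handled properly, so your proof stands. The trade-off: the paper's route is shorter and yields as a by-product the structural identity displayed above (the longer-horizon transform, viewed on the shorter window, is again a $\tr ^{t}$-transform of $\phi $), whereas your route avoids having to produce that identification and is uniform with the machinery of \sref{;sptrans}, at the cost of routing through $A$ and a differentiation step. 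One cosmetic point: you use $u$ both for the fixed horizon extension in $t+u$ and as a running time or integration variable on $(0,t]$; rename the latter to avoid a clash with the $u$ fixed in the statement.
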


Since, in the case $u=0$, $\phi _{t}+\ct ^{t+u}(\phi )(t)=0$ by 
\pref{;pct}\thetag{i}, the above proposition gives an extension of 
the property $\ct \circ \ct =\id $ in \pref{;pct}\thetag{iv}.

\begin{proof}[Proof of \pref{;ptcomp}]
 For every $0\le s\le t$, we have, by the definition \eqref{;defct} 
 of $\ct ^{t}$,
 \begin{align}\label{;qptcomp}
  \ct ^{t}\bigl( 
 \ct ^{t+u}(\phi )
 \bigr) (s)=\tr ^{t}_{2\ct ^{t+u}(\phi )(t)}\bigl( 
 \ct ^{t+u}(\phi )
 \bigr) (s).
 \end{align}
 It is also readily seen from the definition~\eqref{;ttrans} of 
 $\{ \tr ^{t}_{z}\} _{z\in \R }$ that 
 \begin{align*}
  \ct ^{t+u}(\phi )(v)=\tr ^{t}_{\phi _{t}-\ct ^{t+u}(\phi )(t)}(\phi )(v),
  \quad 0\le v\le t.
 \end{align*}
 Therefore, by the semigroup property in \lref{;lttrans}\thetag{iv}, 
 the right-hand side of \eqref{;qptcomp} is equal to  
 \begin{align*}
  \tr ^{t}_{2\ct ^{t+u}(\phi )(t)+\phi _{t}-\ct ^{t+u}(\phi )(t)}
  (\phi ) (s),
 \end{align*}
 which proves the claim.
\end{proof}

We end this section with a remark concerning identity \eqref{;qrev}.

\begin{rem}\label{;rrev}
 By \eqref{;itrev}, it holds that 
 \begin{align*}
 \left( 
 e^{B_{t}},\,A_{t}
 \right) \eqd 
 \left( e^{-B_{t}},\,e^{-2B_{t}}A_{t}\right) .
 \end{align*}
 Indeed, the left-hand side is identical in law with 
 \begin{align*}
  \left( e^{R(B)(t)},\,A_{t}(R(B))\right) ,
 \end{align*}
 which is equal to the right-hand side by the definition \eqref{;trev} 
 of $R$. The above identity in law may also be deduced from 
 \tref{;tmain} in such a way that 
 \begin{equation*}
  \begin{split}
  \left( 
  e^{B_{t}},\,A_{t}
  \right) 
  &\eqd \left( e^{\ct (B)(t)},\,A_{t}(\ct (B))\right) \\
  &=\left( e^{-B_{t}},\,e^{-2B_{t}}A_{t}\right) ,
 \end{split}
 \end{equation*}
 thanks to properties~\thetag{i} and \thetag{ii} in \pref{;pct} 
 for the second line.
\end{rem}

%%%%%%%%% New Section %%%%%%%%%
\section{Proofs of \tref{;tmain} and \cref{;cmain}}\label{;sptmain}

In this section, we prove \tref{;tmain} and \cref{;cmain}. 
We give two proofs of \tref{;tmain}; although the second proof 
is more direct, the first one, which utilizes \lref{;lbb} below, is of 
interest in its own right. 

\subsection{First proof of \tref{;tmain}}\label{;ssp1tmain}

For every $x\in \R $, we denote by 
$b^{x}=\{ b^{x}_{s}\} _{0\le s \le t}$ a Brownian bridge of 
duration $t$ starting from $0$ and ending at $x$. Notice that 
\begin{align}\label{;bb}
 b^{x}\eqd \left\{ b^{0}_{s}+\frac{x}{t}s\right\} _{0\le s \le t}.
\end{align}
We begin with the following lemma.

\begin{lem}\label{;lbb}
For every $x,z\in \R $, we have, for any nonnegative measurable 
functional $F$ on $C([0,t];\R )$, 
\begin{equation}\label{;eqlbb}
 \begin{split}
 &\exp \left( 
 -\frac{z^{2}}{2t}
 \right) \ex \!\left[ 
 \exp \left\{ 
 -\frac{\cosh x}{Z_{t}(b^{z})}
 \right\} F\bigl( 
 \tr _{z-x}(b^{z})
 \bigr) 
 \right] \\
 &=\exp \left( 
 -\frac{x^{2}}{2t}
 \right) \ex \!\left[ 
 \exp \left\{ 
 -\frac{\cosh z}{Z_{t}(b^{x})}
 \right\} F(b^{x})
 \right] .
 \end{split}
\end{equation}
\end{lem}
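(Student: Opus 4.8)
The plan is to prove \eqref{;eqlbb} by an explicit computation that conditions on the endpoint and uses the Gaussian density of Brownian motion together with the transformation rule for $\tr_z$ recorded in \lref{;lttrans}.

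First I would observe that the statement essentially concerns the (non-normalized) law of the Brownian bridge. Write $p_t(0,y)=(2\pi t)^{-1/2}e^{-y^2/(2t)}$ for the transition density, so that a convenient way to encode the bridge $b^z$ is via the absolute-continuity relation: for any nonnegative measurable $G$ on $C([0,t];\R)$,
\begin{align*}
 p_t(0,z)\,\ex\!\left[ G(b^z)\right]
 =\ex\!\left[ G\bigl(\{ B_s\} _{0\le s\le t}\bigr)\,\delta\text{-type weight at }B_t=z\right],
\end{align*}
or, more usefully for our purposes, the identity obtained by integrating against a bounded test function $h$ of the endpoint:
\begin{align*}
 \ex\!\left[ h(B_t)\,G\bigl(\{ B_s\}_{0\le s\le t}\bigr)\right]
 =\int_{\R} p_t(0,z)\,h(z)\,\ex\!\left[ G(b^z)\right] dz.
\end{align*}
Thus, multiplying \eqref{;eqlbb} through by $(2\pi t)^{-1/2}$ and reading $e^{-z^2/(2t)}(2\pi t)^{-1/2}=p_t(0,z)$, the claimed identity is equivalent to
\begin{align*}
 \int_{\R} p_t(0,z)\,\ex\!\left[\exp\left\{-\frac{\cosh x}{Z_t(b^z)}\right\} F\bigl(\tr_{z-x}(b^z)\bigr)\,h(z)\right]dz
 =\int_{\R} p_t(0,x)\,\ex\!\left[\exp\left\{-\frac{\cosh z}{Z_t(b^x)}\right\}F(b^x)\,h(z)\right]dx
\end{align*}
for all suitable $h$ (with the roles of the integration variable renamed on the right); equivalently, the map obtained on $B$ by first applying $\tr_{z-x}$ at the level of bridges and then appropriately re-weighting is measure-preserving. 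The cleaner route, which I would actually carry out, is to fix $x$ and introduce on the left-hand side the change of variables $\psi=\tr_{z-x}(\phi)$ at the level of paths.

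The key computational steps are then as follows. By \lref{;lttrans}\thetag{i}, if $\phi\in C([0,t];\R)$ has $\phi_t=z$ then $\tr_{z-x}(\phi)(t)=\phi_t-(z-x)=x$, so $\tr_{z-x}$ maps bridges ending at $z$ to paths ending at $x$; by \lref{;lttrans}\thetag{iv} its inverse is $\tr_{x-z}$, which maps paths ending at $x$ back to paths ending at $z$. Next, by \lref{;lttrans}\thetag{iii}, $Z_t$ is invariant: $Z_t(\tr_{z-x}(\phi))=Z_t(\phi)$, so on setting $\psi=\tr_{z-x}(\phi)$ we have $Z_t(\phi)=Z_t(\psi)$, and hence $\cosh x / Z_t(b^z)$ becomes $\cosh x / Z_t(\psi)$ while in the target integral the corresponding factor is $\cosh z/Z_t(b^x)$. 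Therefore the exponential weights on the two sides match \emph{after} we account for the endpoint factors $e^{-z^2/(2t)}$ versus $e^{-x^2/(2t)}$: the content of the lemma is exactly that the Jacobian (Radon--Nikodym density) of the map $b^z\mapsto\tr_{z-x}(b^z)$ from the bridge law of duration $t$ ending at $z$ to the bridge law ending at $x$ equals the ratio $\exp\{(z^2-x^2)/(2t)\}\exp\{-(\cosh z-\cosh x)/Z_t\}$, with the understanding that $Z_t$ is the common value $Z_t(\phi)=Z_t(\psi)$. I would establish this density by computing the Cameron--Martin/Girsanov-type functional: from \eqref{;ttrans}, $\tr_{z-x}(\phi)(s)=\phi_s-\log\{1+(A_s(\phi)/A_t(\phi))(e^{z-x}-1)\}$, so the transformation is a bounded-variation perturbation of $\phi$, and one can compute its effect on the path measure via the explicit formula for the law of $(b^z_s)$ in terms of $(b^0_s)$ as in \eqref{;bb}. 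In practice the most efficient implementation is to note that $\tr_{z-x}$ is, up to deterministic endpoint shifts, the restriction to bridges of the non-anticipative transformation $T_\alpha$ of \eqref{;tra} with a suitable $\alpha$ depending on the (anticipated) value $A_t$; the key identity \eqref{;deriad} turns $1/A_s$, and hence the relevant stochastic integrals, into functionals expressible through $Z_u$, and the appearance of $\cosh$ and of the quadratic endpoint terms comes from completing the square in the Gaussian weight.

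The main obstacle I anticipate is precisely the bookkeeping of this change of variables: $\tr_{z-x}$ is an \emph{anticipative} transformation (it involves $A_t(\phi)$, which depends on the whole path), so one cannot invoke Girsanov directly, and one must instead either (a) condition on $A_t$ and work with the resulting conditional bridge law, for which the transformation becomes a genuine $T_\alpha$ and the Jacobian is classical, or (b) disintegrate over the endpoint first, as sketched above, reducing everything to an identity between two explicit integrals over $\R$ against Gaussian densities, where the $\cosh$ terms emerge from $e^{-\cosh x/Z_t}$ after symmetrizing in $x\leftrightarrow z$. Either way, the delicate point is checking that all the path-dependent factors ($A_s$, $Z_t$, and the exponential of the Itô correction term) conspire so that the only net discrepancy between the two sides is the explicit ratio of endpoint Gaussians $\exp\{(z^2-x^2)/(2t)\}$ and the explicit swap of $\cosh x$ and $\cosh z$ in the integrand; I would verify this by carrying out the computation of the Radon--Nikodym density of $\tr_{z-x}$ on bridge space in full and then reading off that the resulting identity is symmetric under $x\leftrightarrow z$ in the required sense.
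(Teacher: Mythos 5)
There is a genuine gap: your argument never establishes the one fact that carries all the weight, namely the Radon--Nikodym density of the anticipative map on path space. Your reduction (disintegrate over the endpoint, use \lref{;lttrans}\thetag{i} to see that $\tr_{z-x}$ sends paths ending at $z$ to paths ending at $x$, use \thetag{iii} to keep $Z_t$ fixed, and identify the lemma with the statement that the law of $\tr_{z-x}(b^{z})$ has density $\exp\{(z^{2}-x^{2})/(2t)\}\exp\{(\cosh x-\cosh z)/Z_{t}\}$ with respect to the law of $b^{x}$) is only a restatement of \eqref{;eqlbb}, not a proof of it; the sentence ``I would verify this by carrying out the computation of the Radon--Nikodym density of $\tr_{z-x}$ on bridge space in full'' is precisely the missing step. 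Neither of your two fallback routes closes it. Route (a) --- condition on $A_{t}$ so that $\tr_{z-x}$ ``becomes a genuine $T_{\al}$ with classical Jacobian'' --- fails because under conditioning on $A_{t}$ the path is no longer a Brownian bridge, so there is no classical Girsanov formula to invoke; moreover the absolute continuity formula for $T_{\al}$ itself (Donati-Martin--Matsumoto--Yor) is a nontrivial theorem proved under the unconditional Wiener measure, not something available for the conditional law. Route (b) is circular: the ``identity between two explicit integrals over $\R$ against Gaussian densities'' is exactly the lemma, since the expectations involve the joint law of the whole path with $Z_{t}$ and $A_{\cdot}$, which is the hard part.

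The paper avoids this by importing the already-established Girsanov-type identity for the anticipative transformation under Wiener measure, \cite[Theorem~1.2]{har22}:
\begin{align*}
 \ex \!\left[ F\bigl( \tr _{z}(B)\bigr) \right]
 =\ex \!\left[ \exp \left\{ \frac{\cosh B_{t}-\cosh (z+B_{t})}{Z_{t}}\right\} F(B)\right] ,
\end{align*}
then substituting $\exp\{-\cosh \phi _{t}/Z_{t}(\phi )\}f(\phi _{t})F(\phi )$, disintegrating over the endpoint $B_{t}$ as you propose, translating in the endpoint variable, and using the arbitrariness of $f$ to obtain the identity for a.e.\ $x$; a continuity-in-$x$ argument (via \eqref{;bb} and bounded convergence, for $F$ bounded continuous) upgrades this to all $x$, and monotone class arguments give general nonnegative measurable $F$. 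If you are permitted to cite \cite[Theorem~1.2]{har22}, your skeleton can be completed along exactly these lines --- but you would also need to address the a.e.-versus-everywhere issue in $x$, which your sketch does not mention. As written, the proposal identifies the correct structure but does not prove the lemma.
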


Observe that, in the left-hand side,  by \lref{;lttrans}\thetag{i},  
$\tr _{z-x}(b^{z})(t)=z-(z-x)=x$, which is consistent with the 
right-hand side, meaning that $b^{x}_{t}=x$.

Fix $z\in \R $ arbitrarily. Recall from \cite[Theorem~1.2]{har22} 
that, for any nonnegative measurable functional $F$ on $C([0,t];\R )$,
\begin{align*}
 \ex \!\left[ 
 F\bigl( \tr _{z}(B)\bigr) 
 \right] 
 =\ex \!\left[ 
 \exp \left\{ 
 \frac{\cosh B_{t}-\cosh (z+B_{t})}{Z_{t}}
 \right\} F(B)
 \right] .
\end{align*}
We replace $F$ by a functional of the form 
\begin{align*}
 \exp \left\{ 
 -\frac{\cosh \phi _{t}}{Z_{t}(\phi )} 
 \right\} F(\phi ),\quad 
 \phi \in C([0,t];\R ),
\end{align*}
with $F$ a nonnegative measurable functional again; then, by 
properties~\thetag{i} and \thetag{iii} in \lref{;lttrans}, the above 
identity turns into 
\begin{align*}
 \ex \!\left[ 
 \exp \left\{ 
 -\frac{\cosh (B_{t}-z)}{Z_{t}}
 \right\} F\bigl( \tr _{z}(B)\bigr) 
 \right] 
 =\ex \!\left[ 
 \exp \left\{ 
 -\frac{\cosh (z+B_{t})}{Z_{t}}
 \right\} F(B) 
 \right] .
\end{align*}

\begin{proof}[Proof of \lref{;lbb}]
In the last equation, we substitute into $F$ a functional of the 
form 
\begin{align*}
 f(\phi _{t})F(\phi ),\quad \phi \in C([0,t];\R ),
\end{align*}
where 
$f:\R \to [0,\infty )$ is a measurable function, and we suppose, 
to begin with, that $F:C([0,t];\R )\to [0,\infty )$ is bounded 
and continuous. Then we have, by noting \lref{;lttrans}\thetag{i} 
as to the left-hand side,  
\begin{align*}
 &\int _{\R }\frac{dx}{\sqrt{2\pi t}}\exp \left( 
 -\frac{x^{2}}{2t}
 \right) f(x-z)\ex \!\left[ 
 \exp \left\{ 
 -\frac{\cosh (x-z)}{Z_{t}(b^{x})}
 \right\} F\bigl( \tr _{z}(b^{x})\bigr) 
 \right] \\
 &=\int _{\R }\frac{dx}{\sqrt{2\pi t}}\exp \left( 
 -\frac{x^{2}}{2t}
 \right) f(x)\ex \!\left[ 
 \exp \left\{ 
 -\frac{\cosh (z+x)}{Z_{t}(b^{x})}
 \right\} F(b^{x})
 \right] .
\end{align*}
By translation, the left-hand side is 
rewritten as 
\begin{align*}
 \int _{\R }\frac{dx}{\sqrt{2\pi t}}\exp \left\{  
 -\frac{(x+z)^{2}}{2t}
 \right\}  f(x)\ex \!\left[ 
 \exp \left\{ 
 -\frac{\cosh x}{Z_{t}(b^{x+z})}
 \right\} F\bigl( \tr _{z}(b^{x+z})\bigr) 
 \right] .
\end{align*}
Therefore, because of the arbitrariness of $f$, we have, 
for a.e.\ $x\in \R $,
\begin{align*}
 &\exp \left\{  
 -\frac{(x+z)^{2}}{2t}
 \right\}  \ex \!\left[ 
 \exp \left\{ 
 -\frac{\cosh x}{Z_{t}(b^{x+z})}
 \right\} F\bigl( \tr _{z}(b^{x+z})\bigr) 
 \right] \\
 &=\exp \left( 
 -\frac{x^{2}}{2t}
 \right) \ex \!\left[ 
 \exp \left\{ 
 -\frac{\cosh (z+x)}{Z_{t}(b^{x})}
 \right\} F(b^{x})
 \right] .
\end{align*}
In view of \eqref{;bb} and thanks to the boundedness and continuity 
of $F$, the bounded convergence theorem entails that 
both sides are continuous in $x$. Hence the last 
equality holds for any $x$, and for any $z$ as well since 
$z$ was arbitrarily fixed. Consequently, replacing $z$ by 
$z-x$ proves identity \eqref{;eqlbb} 
when $F$ is bounded and continuous. Then, density and 
monotone class arguments extend $F$ to any 
nonnegative measurable function as claimed.
\end{proof}

We are in a position to prove \tref{;tmain}.

\begin{proof}[First proof of \tref{;tmain}]
 For each $x\in \R $, in \eqref{;eqlbb}, we substitute $-x$ into $z$ 
 and replace $F$ by a functional of the form 
 \begin{align*}
  \exp \left\{ 
  \frac{\cosh x}{Z_{t}(\phi )}
  \right\} F(\phi ),\quad \phi \in C([0,t];\R ),
 \end{align*}
 with $F$ an arbitrary nonnegative 
 measurable functional. Then, thanks to \lref{;lttrans}\thetag{iii} 
 as to the left-hand side, identity \eqref{;eqlbb} becomes 
 \begin{align}\label{;tbb}
  \ex \!\left[ 
  F\bigl( 
  \tr _{-2x}(b^{-x})
  \bigr) 
  \right] 
  =\ex \!\left[ 
  F(b^{x})
  \right] .
 \end{align}
 Integrating both sides with respect to $P(B_{t}\in dx)$ 
 over $\R $ and using the symmetry $-B\eqd B$ on the 
 left-hand side leads to the conclusion.
\end{proof}

\subsection{Second proof of \tref{;tmain}}\label{;ssp2tmain}

We denote by $\{ \Z _{s}\} _{s\ge 0}$ the natural filtration of the 
process $\{ Z_{s}\} _{s\ge 0}$. The proof of \tref{;tmain} given below 
hinges upon the observation that the conditional law 
of $B_{t}$ given $\Z _{t}$ is symmetric as in the next lemma; 
see also \rref{;rlcsym} at the end of this subsection.

\begin{lem}\label{;lcsym}
 It holds that 
 \begin{align*}
 \left( 
 B_{t},\,\{ Z_{s}\} _{0\le s\le t}
 \right) \eqd 
 \left( 
 -B_{t},\,\{ Z_{s}\} _{0\le s\le t}
 \right) .
 \end{align*}
\end{lem}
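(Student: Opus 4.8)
The plan is to recognize $\{Z_s\}_{0\le s\le t}$ as a functional of Brownian motion that is invariant under a suitable symmetry exchanging $B_t$ with $-B_t$, and the transformation $\ct$ from \pref{;pct} is exactly the tool that does this. Indeed, by \pref{;pct}\thetag{iii} we have $Z_s(\ct(B))=Z_s(B)$ for all $0\le s\le t$, while by \pref{;pct}\thetag{i} we have $\ct(B)(t)=-B_t$. Therefore the pair on the right-hand side of the claimed identity can be written as
\begin{align*}
 \left( -B_t,\,\{Z_s\}_{0\le s\le t}\right)
 =\left( \ct(B)(t),\,\{Z_s(\ct(B))\}_{0\le s\le t}\right).
\end{align*}
So the lemma would follow immediately once we know that $\ct(B)\eqd B$ as processes on $C([0,t];\R)$ — but that is precisely \tref{;tmain}, whose proof we are in the middle of. Hence this route is circular and must be avoided; the first obstacle is to find an independent argument.

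The honest approach is instead to prove the lemma directly, and the natural candidate for the symmetry is the time reversal $R$ of \eqref{;trev}, under which the law of $B$ is invariant by \eqref{;itrev}. First I would express $Z_s$ in terms of the time-reversed path. Using \eqref{;deriad}, one has, for $0<s\le t$,
\begin{align*}
 \frac{1}{A_s(\phi )}=\int _s^t\frac{du}{\{Z_u(\phi )\}^2}+\frac{e^{-\phi _t}}{Z_t(\phi )},
\end{align*}
so that $\{Z_s\}_{0\le s\le t}$ is determined by $\{1/A_s\}_{0<s\le t}$ together with $B_t$ (via \eqref{;deria}, $u\mapsto 1/A_u$ has derivative $-1/Z_u^2$, and $1/A_t=e^{-B_t}/Z_t$). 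A more efficient route: observe that $Z_s(\phi )=e^{-\phi _s}A_s(\phi )=e^{-\phi _s}\int_0^s e^{2\phi _r}\,dr=\int_0^s e^{2\phi _r-\phi _s}\,dr$, and under the time-reversal $R(\phi )(v)=\phi _{t-v}-\phi _t$ one can rewrite $A_t(\phi )$ and $e^{-\phi _t}A_t(\phi )$ as functionals of $R(\phi )$ — this is the computation already carried out in \rref{;rrev}, which gives $(e^{B_t},A_t)\eqd(e^{-B_t},e^{-2B_t}A_t)$. The task is to upgrade that identity in law to include the whole trajectory $\{Z_s\}_{0\le s\le t}$ on both sides.

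Concretely, I would compute $Z_s(R(B))$ for $0\le s\le t$ and try to express it through $\{Z_u(B)\}_{0\le u\le t}$ and $B_t$ in a way that is manifestly sign-symmetric in $B_t$. Writing $R(B)(v)=B_{t-v}-B_t$, a change of variables in the integral defining $A_s(R(B))$ should produce, after simplification, a formula relating $Z_s(R(B))$ to $Z_t(B)-Z_{t-s}(B)$ or to $A_t(B)-A_{t-s}(B)$ rescaled by $e^{-2B_t}$; the key point I expect to emerge is that the family $\{Z_s\}_{0\le s\le t}$, as a set of random variables, is left invariant (as a whole, not pointwise) by the reversal, up to a reparametrization that does not involve the sign of $B_t$. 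Since $R$ fixes the law of $B$ but sends $B_t$ to $B_{t}^{R}:=R(B)(t)=B_0-B_t=-B_t$, this invariance would yield $(B_t,\{Z_s\}_s)\eqd(-B_t,\{Z_s\}_s)$, as desired. The main obstacle is purely computational: carrying out the change of variables in $A_s(R(B))$ cleanly and verifying that the resulting description of $\{Z_s(R(B))\}_{0\le s\le t}$ coincides, as a process, with $\{Z_s(B)\}_{0\le s\le t}$ — once the bookkeeping with $\cosh$, $e^{\pm B_t}$ and the endpoint terms is done, the symmetry should fall out. (If the reparametrization turns out to be genuinely time-changing rather than the identity, an alternative is to condition on $\Z_t$ and show the conditional law of $B_t$ is symmetric by identifying it explicitly, e.g.\ via \eqref{;deriad} which recovers $e^{-B_t}/Z_t=1/A_t-\int_0^t du/Z_u^2$ so that $B_t$ is a symmetric-looking function of $\Z_t$-measurable quantities plus the single degree of freedom $1/A_t$; but I expect the time-reversal argument to be the shortest.)
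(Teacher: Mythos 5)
You correctly flag that invoking \tref{;tmain} would be circular, but the route you then commit to does not work: time reversal does not leave the process $\{Z_s\}_{0\le s\le t}$ invariant. A direct change of variables gives, for any $\phi\in C([0,t];\R )$,
\begin{align*}
 Z_{s}(R(\phi ))=e^{-\phi _{t}-\phi _{t-s}}\bigl( A_{t}(\phi )-A_{t-s}(\phi )\bigr) ,\quad 0\le s\le t,
\end{align*}
which coincides with $Z_{s}(\phi )$ only at $s=t$; that endpoint identity is exactly what \rref{;rrev} extracts from \eqref{;itrev}, and it is all that time reversal gives. For intermediate $s$ the two trajectories genuinely differ: take $t=2$ and the path $\phi _{s}=0$ on $[0,1]$, $\phi _{s}=s-1$ on $[1,2]$; then $Z_{s}(\phi )=s$ on $[0,1]$ while $Z_{s}(R(\phi ))=\sinh s$ there. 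Your hoped-for escape --- invariance of the family $\{Z_{s}\}$ ``as a set, up to a reparametrization'' --- would not rescue the argument even if it held, because the lemma is a statement about the process indexed by $s$ jointly with $B_{t}$, and it is precisely the time-indexed quantities $\int_{s}^{t}du/Z_{u}^{2}$ and $Z_{t}$ that the second proof of \tref{;tmain} feeds it into. So the main argument has a genuine gap, not a mere bookkeeping obstacle.

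Your parenthetical fallback is the right idea and is in fact the paper's proof, but the step you leave vague is the entire content: one must identify the conditional law of $B_{t}$ given $\Z _{t}$, and the paper does this by citing Matsumoto--Yor \cite[Proposition~1.7]{myPI}, which says this conditional law has density proportional to $e^{-\cosh (x)/Z_{t}}$, manifestly symmetric. Nothing in your sketch produces that fact. Moreover the identity you write, $e^{-B_{t}}/Z_{t}=1/A_{t}-\int_{0}^{t}du/Z_{u}^{2}$, is false as stated since $Z_{u}\sim u$ near $0$ makes the integral diverge (\eqref{;deriad} is only valid for $s>0$, and at $s=t$ it reduces to the definition of $Z_{t}$); what \eqref{;deriad} actually shows is that, given the $Z$-trajectory, $B_{t}$ and $A_{s}$ determine one another --- it says nothing about symmetry of the conditional law. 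A non-circular alternative, indicated in \rref{;rlcsym}, is to combine $\{Z_{s}(\db{\mu })\}\eqd \{Z_{s}(\db{-\mu })\}$ from \cite[Theorem~1.6]{myPI} with the Cameron--Martin formula and injectivity of the Mellin transform; either way, some nontrivial input on the law of $Z$ beyond plain time reversal is indispensable, and your proposal does not supply it.
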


\begin{proof}
Let $f:\R \to \R $ be a bounded measurable function and 
$F:C([0,t];\R )\to \R $ a bounded measurable functional.
We have, conditionally on $\Z _{t}$,
\begin{align}\label{;cz1}
 \ex \!\left[ 
 f(B_{t})F(Z)
 \right] =
 \ex \!\left[ 
 \ex \!\left[ 
 f(B_{t})\rmid | \Z _{t}
 \right] F(Z)
 \right] .
\end{align}
We know from \cite[Proposition~1.7]{myPI} that the conditional 
expectation in the right-hand side is equal a.s.\ to 
\begin{align}\label{;cz2}
 \ex \!\left[ 
 f(z_{u})
 \right] \big| _{u=1/Z_{t}}, 
\end{align}
where, for each $u>0$, $z_{u}$ refers to a real-valued random variable 
whose law is given by 
\begin{align*}
 \frac{1}{2K_{0}(u)}e^{-u\cosh x}\,dx,\quad x\in \R .
\end{align*}
Here $K_{0}$ is the modified Bessel function of the third kind 
(or the Macdonald function) of order $0$. Since the above law is 
symmetric, we have 
$
 \ex \!\left[ 
 f(z_{u})
 \right] =
 \ex \!\left[ 
 f(-z_{u})
 \right] 
$ 
for every $u>0$, and hence from \eqref{;cz1} and \eqref{;cz2}, 
\begin{align*}
 \ex \!\left[ 
 f(B_{t})F(Z)
 \right] =
 \ex \!\left[ 
 f(-B_{t})F(Z)
 \right] .
\end{align*}
As $f$ and $F$ are arbitrary, we have the claim.
\end{proof}

By using the above conditional symmetry of $B_{t}$, the second 
proof of \tref{;tmain} proceeds as follows.

\begin{proof}[Second proof of \tref{;tmain}]
If we have proven 
\begin{align}\label{;qp2}
 \left\{ 
 A_{s}(\ct (B))
 \right\} _{0\le s\le t}
 \eqd \left\{ 
 A_{s}
 \right\} _{0\le s\le t},
\end{align}
then, taking the derivative with respect to $s$ on each side of 
the above identity leads to the conclusion. 
To this end, notice that, for each $0<s\le t$, 
\begin{align*}
 \frac{1}{A_{s}(\ct (B))}&=\int _{s}^{t}\frac{du}{Z_{u}^{2}}+\frac{e^{-B_{t}}}{Z_{t}}
 +\frac{e^{2B_{t}}-1}{A_{t}}\\
 &=\int _{s}^{t}\frac{du}{Z_{u}^{2}}+\frac{e^{B_{t}}}{Z_{t}},
\end{align*}
where we have used \pref{;pct}\thetag{ii} as well as relation \eqref{;deriad} 
for the first line and the definition of $Z_{t}$ (see \eqref{;defz}) for the 
second. By \lref{;lcsym}, the last expression entails that 
\begin{align*}
 \left\{ 
 \frac{1}{A_{s}(\ct (B))}
 \right\} _{0<s\le t}
 &\eqd \left\{ 
 \int _{s}^{t}\frac{du}{Z_{u}^{2}}+\frac{e^{-B_{t}}}{Z_{t}}
 \right\} _{0<s\le t}\\
 &=\left\{ 
 \frac{1}{A_{s}}
 \right\} _{0<s\le t},
\end{align*}
which proves \eqref{;qp2}. Here we used relation \eqref{;deriad} 
again for the last equality.
\end{proof}

The above proof confirms that \tref{;tmaind} is indeed equivalent to 
\tref{;tmain} as indicated just above \tref{;tmaind}.

\begin{rem}\label{;rlcsym}
We may associate \lref{;lcsym} with the fact \cite[Theorem~1.6\thetag{ii}]{myPI} 
that, 
for any $\mu \in \R $, 
 \begin{align}\label{;symz}
  \left\{ 
  Z_{s}(\db{\mu })
  \right\} _{s\ge 0}
  \eqd 
  \left\{ 
  Z_{s}(\db{-\mu })
  \right\} _{s\ge 0},
 \end{align}
in such a way that, by the Cameron--Martin formula, 
\begin{align*}
 \ex \!\left[ 
 e^{\mu B_{t}}F(Z)
 \right] 
 =\ex\! \left[ 
 e^{-\mu B_{t}}F(Z)
 \right] 
\end{align*}
for every bounded measurable functional $F$ on $C([0,t];\R )$. Then 
the injectivity of the Mellin transform entails the lemma. Identity \eqref{;symz} 
may be explained by the identity in law between the second coordinates 
in \eqref{;oppdg}, and by the fact that $Z\circ T_{\al }=Z$ for every $\al \ge 0$ 
(\cite[Proposition~2.1\thetag{iii}]{dmy}).
\end{rem}

\subsection{Proof of \cref{;cmain}}\label{;sspcmain}

First we show identity \eqref{;opp}. Let $F$ be a nonnegative 
measurable functional on $C([0,t];\R )$. For every $\mu \in \R $, 
it holds that, by \tref{;tmain},  
\begin{align*}
 \ex \!\left[ 
 F\bigl( 
 \ct (B)
 \bigr) e^{\mu \ct (B)(t)}
 \right] 
 =\ex \!\left[ 
 F(B)e^{\mu B_{t}}
 \right] .
\end{align*}
Multiplying both sides by $e^{-\mu ^{2}t/2}$ and noting 
$\ct (B)(t)=-B_{t}$ by \pref{;pct}\thetag{i}, we have, by the 
Cameron--Martin formula, 
\begin{align*}
 \ex \!\left[ 
 F\bigl( 
 \ct (\db{-\mu })
 \bigr) 
 \right] 
 =\ex \!\left[ 
 F(\db{\mu })
 \right] ,
\end{align*}
which verifies \eqref{;opp}.

\begin{proof}[Proof of \cref{;cmain}]
It follows from \eqref{;opp} that 
\begin{align*}
 \left\{ 
 \left( 
 \ct (\db{-\mu })(s),\,(\ct \circ \ct )\bigl( \db{-\mu }\bigr) (s)
 \right) 
 \right\} _{0\le s\le t}
 \eqd \left\{ 
 \left( 
 \db{\mu }_{s},\,\ct \bigl( 
 \db{\mu }
 \bigr) (s)
 \right) 
 \right\} _{0\le s\le t}.
\end{align*}
Since $\ct \circ \ct =\mathrm{Id}$ as stated in \pref{;pct}\thetag{iv}, 
we have the claim.
\end{proof}

Note that \cref{;cmain} may also be obtained 
by integrating both sides of \eqref{;tbb} with respect to 
the probability measure 
\begin{align*}
 \frac{1}{\sqrt{2\pi t}}\exp \left\{ 
 -\frac{(x-\mu t)^{2}}{2t}
 \right\} dx
\end{align*}
over $\R $, for any drift $\mu \in \R $.

%%%%%%%%% New Section %%%%%%%%%
\section{Proof of \pref{;pinv}}\label{;sppinv}

This section is devoted to the proof of \pref{;pinv};  
in order to make the paper self-contained as much as possible, 
we also give a proof of \pref{;poppdg}, which will be done by 
using \cref{;cmain}. Except for the proof of \pref{;poppdg}, 
we suppose that $t>0$ is fixed.

We begin this section with the 

\begin{proof}[Proof of \pref{;poppdg} via \cref{;cmain}]
Let $\mu >0$ and fix $u>0$ arbitrarily. Set the process 
$X=\{ X_{v}\} _{v\ge 0}$ by 
\begin{align*}
 X_{v}:=\db{\mu }_{v+u}-\db{\mu }_{u}, 
\end{align*}
which has the same law as $\db{\mu }$ and 
is independent of $\{ B_{s}\} _{0\le s\le u}$.
We let $t>0$ be such that $u<t$. Then, by the definition \eqref{;defct} 
of $\ct $, \cref{;cmain} entails that 
the two-dimensional process 
\begin{align*}
 \left( 
 \db{\mu }_{s},\,
 \db{\mu }_{s}-\log \left\{ 
 1+\da{\mu }_{s}
 \frac{
 e^{2\db{\mu }_{u}}\!e^{2X_{t-u}}-1
 }
 {
 \da{\mu }_{u}+e^{2\db{\mu }_{u}}\!A_{t-u}(X)
 }
 \right\} 
 \right) 
 ,\quad 0\le s\le u,
\end{align*}
is identical in law with 
$\bigl\{ 
\bigl( 
\ct (\db{-\mu })(s),\,\db{-\mu }_{s}
\bigr) 
\bigr\} _{0\le s\le u}$. 
Rewrite 
\begin{align}\label{;rv}
 \frac{
 e^{2\db{\mu }_{u}}\!e^{2X_{t-u}}-1
 }
 {
 \da{\mu }_{u}+e^{2\db{\mu }_{u}}\!A_{t-u}(X)
 }
 =\frac{
 e^{2\db{\mu }_{u}}-e^{-2X_{t-u}}
 }
 {
 e^{-2X_{t-u}}\da{\mu }_{u}+e^{2\db{\mu }_{u}}\!e^{-2X_{t-u}}A_{t-u}(X)
 },
\end{align}
and observe that $e^{-2X_{t-u}}\to 0$ a.s.\ as $t\to \infty $ 
because $\mu >0$. Moreover, by the time reversal of 
Brownian motion, 
\begin{align*}
 e^{-2X_{t-u}}A_{t-u}(X)\eqd \da{-\mu }_{t-u},
\end{align*}
which converges in law to $1/(2\ga _{\mu })$ as $t\to \infty $ 
by Dufresne's identity \eqref{;duf}. Hence, owing to the 
independence of $\{ B_{s}\} _{0\le s\le u}$ and $X$, 
the pair of the process $\bigl\{ \db{\mu }_{s}\bigr\} _{0\le s \le u}$ and 
the random variable \eqref{;rv} jointly converges in law to 
that of $\bigl\{ \db{\mu }_{s}\bigr\} _{0\le s\le u}$ and $2\ga _{\mu }$, 
with $\ga _{\mu }$ being independent of $B$. Therefore we obtain 
the identity in law 
\begin{align*}
 &\left\{ 
 \left( 
 \db{\mu }_{s},\,
 \db{\mu }_{s}-\log \left\{ 
 1+2\ga _{\mu }\da{\mu }_{s}\right\} 
 \right) 
 \right\} _{0\le s\le u}\\
 &\eqd 
 \left\{ 
 \left( 
 \db{-\mu }_{s}-\log \Biggl( 
 1-\frac{\da{-\mu }_{s}}{\da{-\mu }_{\infty }}
 \Biggr) ,\,\db{-\mu }_{s}
 \right) 
 \right\} _{0\le s\le u},
\end{align*}
where the expression of the first coordinate in the 
right-hand side is due to the definition \eqref{;defct} of 
$\ct $ and the fact that $e^{2\db{-\mu }_{t}}\to 0$ a.s.\ as 
$t\to \infty $.
Since $u>0$ is arbitrary, the last identity in law proves the proposition 
by the definition~\eqref{;tra} of $\{ T_{\al }\} _{\al \ge 0}$.
\end{proof}

We proceed to the proof of \pref{;pinv}. For every $\al \ge 0$, 
note that the expression 
\begin{align}\label{;expr1}
 \ct (\T _{\al }(\phi ))(s),\quad 0\le s\le t,
\end{align}
makes sense for any $\phi \in C([0,t];\R )$, and so does 
the expression 
\begin{align}\label{;expr2}
 \T _{\al }(\ct (\phi ))(s),\quad 0\le s\le t,
\end{align}
because $\T _{\al }$ is a non-anticipative transformation. 
Notice that, for any $\phi \in C([0,t];\R )$, $T_{\al }(\phi )$ 
is represented as 
\begin{align}\label{;tat}
 T_{\al }(\phi )(s)=\tr _{\log \{ 1+\al A_{t}(\phi )\} }(\phi )(s),\quad 
 0\le s\le t,
\end{align}
by the definition \eqref{;ttrans} of $\{ \tr _{z}\} _{z\in \R }$; indeed,  
\begin{align*}
 \tr _{\log \{ 1+\al A_{t}(\phi )\} }(\phi )(s)
 &=\phi _{s}-\log \left\{ 
 1+\frac{A_{s}(\phi )}{A_{t}(\phi )}\bigl( 1+\al A_{t}(\phi )-1\bigr) 
 \right\} \\
 &=T_{\al }(\phi )(s).
\end{align*}

\begin{lem}\label{;lexpr}
\thetag{1} Expression~\eqref{;expr1} admits the 
representation 
\begin{align*}
 \ct (\T _{\al }(\phi ))(s)
 =\tr _{\log \{ e^{2\phi _{t}}/(1+\al A_{t}(\phi ))\} }(\phi )(s),\quad 
 0\le s\le t.
\end{align*}

\noindent
\thetag{2} Expression~\eqref{;expr2} admits the 
representation 
\begin{align*}
 \T _{\al }(\ct (\phi ))(s)
 =\tr _{\log \{e^{2\phi _{t}}+\al A_{t}(\phi )\}}(\phi )(s),
 \quad 0\le s\le t.
\end{align*}
\end{lem}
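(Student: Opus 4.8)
The plan is to prove both representations by the same mechanism used repeatedly in Section~\ref{;sptrans}: reduce each identity to an identity between the associated functionals $1/A_{s}(\cdot )$, which determine the path uniquely once the terminal value is known (via \eqref{;deriad} together with property~\thetag{i}-type information), and then verify the functional identity by a direct computation using \lref{;lttrans}\thetag{ii} and \pref{;pct}\thetag{ii}. Concretely, for part~\thetag{2} I would first record, using \eqref{;tat}, that $\ct (\phi )=\tr _{2\phi _{t}}(\phi )$ and then apply $\T _{\al }$ in the form \eqref{;tat} to the path $\ct (\phi )$; this gives
\begin{align*}
 \T _{\al }(\ct (\phi ))
 =\tr _{\log \{ 1+\al A_{t}(\ct (\phi ))\} }(\ct (\phi ))
 =\tr _{\log \{ 1+\al A_{t}(\ct (\phi ))\} }\bigl( \tr _{2\phi _{t}}(\phi )\bigr) ,
\end{align*}
and by the semigroup property \lref{;lttrans}\thetag{iv} this equals $\tr _{2\phi _{t}+\log \{ 1+\al A_{t}(\ct (\phi ))\} }(\phi )$. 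Since $A_{t}(\ct (\phi ))=e^{-2\phi _{t}}A_{t}(\phi )$ by \pref{;pct}\thetag{ii}, the exponent becomes $2\phi _{t}+\log \{ 1+\al e^{-2\phi _{t}}A_{t}(\phi )\} =\log \{ e^{2\phi _{t}}+\al A_{t}(\phi )\} $, which is exactly the claimed form.

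For part~\thetag{1} the order is reversed: I would first apply $\T _{\al }$ to $\phi $, obtaining $\T _{\al }(\phi )=\tr _{\log \{ 1+\al A_{t}(\phi )\} }(\phi )$ by \eqref{;tat}, and then apply $\ct $. Here the key point is the composition rule \eqref{;compt} in \pref{;pct}\thetag{iv}, namely $(\ct \circ \tr _{z})(\psi )=\tr _{2\psi _{t}-z}(\psi )$; taking $\psi =\phi $ is not directly legitimate because the outer $\ct $ and inner $\tr _{z}$ must act on the same argument, so instead I would note that $\ct $ applied to $\T _{\al }(\phi )$ means, by definition \eqref{;defct}, $\tr _{2\T _{\al }(\phi )(t)}(\T _{\al }(\phi ))$. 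Using \lref{;lttrans}\thetag{i}, $\T _{\al }(\phi )(t)=\tr _{\log \{ 1+\al A_{t}(\phi )\} }(\phi )(t)=\phi _{t}-\log \{ 1+\al A_{t}(\phi )\} $, so
\begin{align*}
 \ct (\T _{\al }(\phi ))
 =\tr _{2\phi _{t}-2\log \{ 1+\al A_{t}(\phi )\} }\bigl( \tr _{\log \{ 1+\al A_{t}(\phi )\} }(\phi )\bigr) ,
\end{align*}
and the semigroup property collapses this to $\tr _{2\phi _{t}-\log \{ 1+\al A_{t}(\phi )\} }(\phi )=\tr _{\log \{ e^{2\phi _{t}}/(1+\al A_{t}(\phi ))\} }(\phi )$, as asserted.

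The only mild obstacle is bookkeeping of the terminal values: both representations rely on tracking $(\cdot )(t)$ correctly through each composition, since the exponent in $\ct (\psi )=\tr _{2\psi _{t}}(\psi )$ depends on the terminal value of whatever $\psi $ is, and after an application of $\T _{\al }$ or $\tr _{z}$ that terminal value has shifted. Once those shifts are accounted for — using \lref{;lttrans}\thetag{i} at each stage — the rest is the semigroup identity \lref{;lttrans}\thetag{iv} and the algebraic simplification $2\phi _{t}+\log \{ 1+\al e^{-2\phi _{t}}A_{t}(\phi )\} =\log \{ e^{2\phi _{t}}+\al A_{t}(\phi )\} $ (and its analogue for \thetag{1}), both routine. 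I would present the proof in two short paragraphs, one per part, each consisting of the composition display followed by the exponent simplification.
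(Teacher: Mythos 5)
Your proposal is correct and follows essentially the same route as the paper: part \thetag{2} is the paper's argument verbatim (write $T_{\al }$ via \eqref{;tat}, use $A_{t}(\ct (\phi ))=e^{-2\phi _{t}}A_{t}(\phi )$ from \pref{;pct}\thetag{ii}, then collapse with the semigroup property and the definition \eqref{;defct}), while in part \thetag{1} you merely re-derive the needed instance of \eqref{;compt} from the definition of $\ct $, \lref{;lttrans}\thetag{i} and the semigroup property instead of quoting it. Your scruple about \eqref{;compt} is unnecessary, by the way: that relation is stated precisely for the composition $\ct \bigl( \tr _{z}(\phi )\bigr) $ with the exponent already expressed through the original terminal value $\phi _{t}$, and the paper simply applies it with $z=\log \{ 1+\al A_{t}(\phi )\} $, which for each fixed $\phi $ is just a real number.
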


\begin{proof}
Fix $0\le s\le t$ arbitrarily.

\thetag{1} By \eqref{;tat} and relation~\eqref{;compt} in 
\pref{;pct}\thetag{iv},
\begin{align*}
 \ct \bigl( 
 T_{\al }(\phi )
 \bigr) (s)&=\ct \bigl( 
 \tr _{\log \{ 1+\al A_{t}(\phi )\} }(\phi )
 \bigr) (s)\\
 &=\tr _{2\phi _{t}-\log \{ 1+\al A_{t}(\phi )\} }(\phi )(s),
\end{align*}
which proves the claim.

\thetag{2} By \eqref{;tat}, 
\begin{align*}
 T_{\al }( \ct (\phi ))(s)
 =\tr _{\log \{ 1+\al A_{t}(\ct (\phi ))\} }(\ct (\phi ))(s),
\end{align*}
which, by \pref{;pct}\thetag{i}, is rewritten as 
\begin{align*}
 \tr _{\log \{ 1+\al e^{-2\phi _{t}}A_{t}(\phi )\} }(\ct (\phi ))(s)
 =\tr _{\log \{ 1+\al e^{-2\phi _{t}}A_{t}(\phi )\} +2\phi _{t}}(\phi )(s),
\end{align*}
proving \thetag{2}. Here the last equality is due to the semigroup 
property in \lref{;lttrans}\thetag{iv} and the definition \eqref{;defct} 
of $\ct $.
\end{proof}

Using the above lemma, we prove \pref{;pinv}. First observe that, 
for each fixed $t>0$, by the Markov property of Brownian motion 
and Dufresne's identity \eqref{;duf}, we have the identity in law 
\begin{align}\label{;markov}
 \left( 
 \da{-\mu }_{\infty },\,\bigl\{ 
 \db{-\mu }_{s}
 \bigr\} _{0\le s\le t}
 \right) 
 \eqd 
 \left( 
 \da{-\mu }_{t}+e^{2\db{-\mu }_{t}}\!/(2\ga _{\mu }),\,\bigl\{ 
 \db{-\mu }_{s}
 \bigr\} _{0\le s\le t}
 \right) ,
\end{align}
where, in the right-hand side, $\ga _{\mu }$ is independent of $B$.
This observation entails that, in view of the definition \eqref{;ttrans} of 
$\{ \tr _{z}\} _{z\in \R }$, \pref{;poppdg} may be restated as 
the identity in law between the two two-dimensional processes 
\begin{align}\label{;td1}
 \left( 
 \tr _{
 2\db{-\mu }_{t}-\log \{ e^{2\db{-\mu }_{t}}+2\ga _{\mu }\da{-\mu }_{t}\} 
 }(\db{-\mu })(s),\,\db{-\mu }_{s}
 \right) ,\quad 0\le s\le t,
\end{align}
and 
\begin{align}\label{;td2}
 \left( 
 \db{\mu }_{s},\,T_{2\ga _{\mu }}(\db{\mu })(s)
 \right) ,\quad 0\le s\le t,
\end{align}
because of the equality 
\begin{align*}
 -\frac{\da{-\mu }_{s}}{\da{-\mu }_{t}+e^{2\db{-\mu }_{t}}\!/(2\ga _{\mu })}
 =\frac{\da{-\mu }_{s}}{\da{-\mu }_{t}}\left( 
 \frac{e^{2\db{-\mu }_{t}}}{e^{2\db{-\mu }_{t}}+2\ga _{\mu }\da{-\mu }_{t}}-1
 \right) 
\end{align*}
for every $0\le s\le t$.

\begin{proof}[Proof of \pref{;pinv}]
\thetag{1} Since the process 
\begin{align*}
 \left( 
 \db{\mu }_{s},\,\ct \bigl( T_{2\ga _{\mu }}(\db{\mu })\bigr) (s)
 \right) ,\quad 0\le s\le t,
\end{align*}
is nothing but the right-hand side of \eqref{;pinv1} by 
\lref{;lexpr}\thetag{1}, it suffices to prove, in view of \eqref{;td1} 
and \eqref{;td2}, that the process 
\begin{align}\label{;td1d}
 \left( 
 \tr _{
 2\db{-\mu }_{t}-\log \{ e^{2\db{-\mu }_{t}}+2\ga _{\mu }\da{-\mu }_{t}\} 
 }(\db{-\mu })(s),\,\ct (\db{-\mu })(s)
 \right) ,\quad 0\le s\le t,
\end{align}
is identical in law with the left-hand side of \eqref{;pinv1}. 
To this end, for each $0\le s\le t$, we rewrite the first coordinate in 
\eqref{;td1d} in such a way that 
\begin{align*}
 \tr _{-\log \{ 1+2\ga _{\mu }A_{t}(\ct (\db{-\mu }))\} }\bigl( 
 (\ct \circ \ct )(\db{-\mu })
 \bigr) (s)
\end{align*}
by \pref{;pct}\thetag{ii} and by the fact that $\ct $ is an 
involution (\pref{;pct}\thetag{iv}). Then, by \cref{;cmain}, we see that \eqref{;td1d} 
is identical in law with 
\begin{align*}
 \left( 
 \tr _{-\log \{ 1+2\ga _{\mu }\da{\mu }_{t}\} }\bigl( 
 \ct (\db{\mu })
 \bigr) (s),\,\db{\mu }_{s}
 \right) ,\quad 0\le s\le t,
\end{align*}
which coincides with the left-hand side of \eqref{;pinv1} by 
the semigroup property of $\{ \tr _{z}\} _{z\in \R }$ in 
\lref{;lttrans}\thetag{iv} and the definition \eqref{;defct} of $\ct $.

\thetag{2} Since, for each $0\le s\le t$, we may express the first 
coordinate in \eqref{;td1} as 
\begin{align*}
 \bigl( 
 \ct \circ 
 \tr _{\log \{ e^{2\db{-\mu }_{t}}+2\ga _{\mu }\da{-\mu }_{t}\}}
 \bigr)(\db{-\mu })(s) 
\end{align*}
due to \eqref{;compt} in \pref{;pct}\thetag{iv}, it suffices to 
prove, in view of \eqref{;td1} and \eqref{;td2}, that the process 
\begin{align*}
 \left( 
 \ct (\db{\mu })(s),\,T_{2\ga _{\mu }}(\db{\mu })(s)
 \right) ,\quad 0\le s\le t,
\end{align*}
is identical in law with the right-hand side of \eqref{;pinv2} 
owing to the fact that $\ct $ is an involution. 
Rewriting the last displayed process as 
\begin{align*}
 \left( 
 \ct (\db{\mu })(s),\,T_{2\ga _{\mu }}
 \bigl( (\ct \circ \ct )(\db{\mu })\bigr) (s)
 \right) ,\quad 0\le s\le t,
\end{align*}
we see that it is identical in law with 
\begin{align*}
 \left( 
 \db{-\mu }_{s},\,T_{2\ga _{\mu }}\bigl( 
 \ct (\db{-\mu })
 \bigr) (s)
 \right) ,\quad 0\le s\le t,
\end{align*}
by \cref{;cmain}, which, thanks to \lref{;lexpr}\thetag{2}, coincides 
with the right-hand side of \eqref{;pinv2} as claimed.
\end{proof}

\begin{rem}\label{;rpinv}
By extracting the gamma variable $\ga _{\mu }$ from 
the first coordinate in the left-hand side, 
identity \eqref{;pinv1} is equivalently rephrased as the joint identity 
in law 
\begin{align*}
 \left( 
 \ga _{\mu },\,\left\{ 
 (X^{1}_{s},\,\db{\mu }_{s})
 \right\} _{0\le s\le t}
 \right) \eqd 
 \left( 
 \frac{\ga _{\mu }e^{2\db{\mu }_{t}}}{1+2\ga _{\mu }\da{\mu }_{t}},\,
 \left\{ 
 (\db{\mu }_{s},\,X^{1}_{s})
 \right\} _{0\le s\le t}
 \right) ;
\end{align*}
similarly, identity \eqref{;pinv2} is equivalent to 
\begin{align*}
 \left( 
 \ga _{\mu },\,\left\{ 
 (X^{2}_{s},\,\db{-\mu }_{s})
 \right\} _{0\le s\le t}
 \right) \eqd 
 \left( 
 \frac{\ga _{\mu }}{e^{2\db{-\mu }_{t}}+2\ga _{\mu }\da{-\mu }_{t}},\,
 \left\{ 
 (\db{-\mu }_{s},\,X^{2}_{s})
 \right\} _{0\le s\le t}
 \right) .
\end{align*}
In each of the above two identities, the identity in law between the 
first components may be explained in the following manner: 
\begin{align*}
 \frac{\ga _{\mu }e^{2\db{\mu }_{t}}}{1+2\ga _{\mu }\da{\mu }_{t}}
 &=\frac{\ga _{\mu }}{
 e^{-2\db{\mu }_{t}}+2\ga _{\mu }e^{-2\db{\mu }_{t}}\da{\mu }_{t}
 }\\
 &\eqd \frac{\ga _{\mu }}{e^{2\db{-\mu }_{t}}+2\ga _{\mu }\da{-\mu }_{t}}\\
 &\eqd \frac{1}{2\da{-\mu }_{\infty }},
\end{align*}
which is identical in law with $\ga _{\mu }$ by 
Dufresne's identity \eqref{;duf}. Here the second line is due to the 
time reversal of Brownian motion (see \eqref{;itrev}) and the 
third line is nothing but the identity in law between the 
first components in \eqref{;markov}.

\end{rem}

Combining \cref{;cmain} and \pref{;ptcomp} enables us to obtain in part 
a generalization of \pref{;pinv}. Let $\cB =\{ \cB _{s}\} _{s\ge 0}$ be 
a one-dimensional standard Brownian motion that is independent of $B$. 
For every drift $\mu \in \R $, we denote 
\begin{align*}
 \dca{\mu }_{s}=A_{s}\bigl( \cB ^{(\mu )}\bigr) ,\quad s\ge 0,
\end{align*}
with 
$
\dcb{\mu }=\bigl\{ \dcb{\mu }_{s}\equiv \cB _{s}+\mu s\bigr\} _{s\ge 0}
$. 
Then we have the following distributional invariance of 
$\db{\mu }$ in the presence of the independent 
element $\dcb{\mu }$:

\begin{prop}\label{;pdii}
 For every $\mu \in \R $, it holds that, for any $u\ge 0$, 
 the process 
 \begin{align}\label{;qpdii}
  \db{\mu }_{s}-\log \left\{ 
  1+\frac{\da{\mu}_{s}}{\da{\mu }_{t}}\left( 
  \frac{
  e^{2\db{\mu }_{t}}\!\dca{\mu }_{u}+\da{\mu }_{t}
  }
  {
  \dca{\mu }_{u}+e^{2\dcb{\mu }_{u}}\!\da{\mu }_{t}
  }
  -1\right) 
  \right\} ,\quad 0\le s\le t,
 \end{align}
 is identical in law with $\bigl\{ \db{\mu }_{s}\bigr\} _{0\le s\le t}$.
\end{prop}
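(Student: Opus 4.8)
The plan is to realize the process in \eqref{;qpdii} as $\ct^{t}\circ\ct^{t+u}$ applied to a single Brownian motion with drift $\mu$ of duration $t+u$, and then to read off its law from \eqref{;opp} (a part of \cref{;cmain}) used twice; the point is that two successive applications of $\ct$ flip the drift and then flip it back, which is why the resulting law is that of $\db{\mu}$ rather than $\db{-\mu}$. Concretely, I would set $\psi=\{\psi_{r}\}_{0\le r\le t+u}$ by $\psi_{r}:=\db{\mu}_{r}$ for $0\le r\le t$ and $\psi_{t+v}:=\db{\mu}_{t}+\dcb{\mu}_{v}$ for $0\le v\le u$. Since $\cB$ is independent of $B$, the increment $\{\psi_{t+v}-\psi_{t}\}_{v\ge 0}=\dcb{\mu}$ is, given $\{B_{s}\}_{0\le s\le t}$, an independent Brownian motion with drift $\mu$, so $\psi$ is a Brownian motion with drift $\mu$ over $[0,t+u]$, i.e.\ $\psi\eqd\db{\mu}$ in $C([0,t+u];\R)$. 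A direct substitution gives $A_{r}(\psi)=\da{\mu}_{r}$ for $0\le r\le t$, together with $A_{t+u}(\psi)=\da{\mu}_{t}+e^{2\db{\mu}_{t}}\dca{\mu}_{u}$ and $\psi_{t+u}=\db{\mu}_{t}+\dcb{\mu}_{u}$.

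Next I would apply \pref{;ptcomp} to $\psi$: for $0\le s\le t$,
\begin{align*}
 \ct^{t}\bigl(\ct^{t+u}(\psi)\bigr)(s)=\tr^{t}_{z}(\db{\mu})(s),\qquad z:=\psi_{t}+\ct^{t+u}(\psi)(t),
\end{align*}
where the last equality uses that $\psi$ coincides with $\db{\mu}$ on $[0,t]$ and that $\psi_{t}=\db{\mu}_{t}$. Computing $\ct^{t+u}(\psi)(t)$ from the definition \eqref{;defct} of $\ct^{t+u}$ and \eqref{;ttrans}, and simplifying with the three quantities recorded above --- a routine calculation --- yields
\begin{align*}
 e^{z}=\frac{\da{\mu}_{t}+e^{2\db{\mu}_{t}}\dca{\mu}_{u}}{\dca{\mu}_{u}+e^{2\dcb{\mu}_{u}}\da{\mu}_{t}}.
\end{align*}
Hence, by the definition \eqref{;ttrans} of $\{\tr^{t}_{z}\}_{z\in\R}$, the process $\tr^{t}_{z}(\db{\mu})$ is exactly the one displayed in \eqref{;qpdii}.

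It remains to identify the law. Since $\psi\eqd\db{\mu}$ in $C([0,t+u];\R)$, applying \eqref{;opp} with duration $t+u$ gives $\ct^{t+u}(\psi)\eqd\db{-\mu}$ in $C([0,t+u];\R)$. Because the map sending $\phi\in C([0,t+u];\R)$ to $\ct^{t}$ of its restriction to $[0,t]$ is measurable into $C([0,t];\R)$, it follows that $\ct^{t}\bigl(\ct^{t+u}(\psi)\bigr)\eqd\ct^{t}(\db{-\mu})$ over $[0,t]$, and \eqref{;opp} with duration $t$ gives $\ct^{t}(\db{-\mu})\eqd\db{\mu}$ over $[0,t]$. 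Combining this with the pathwise identity of the previous paragraph proves the proposition.

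I do not anticipate a genuine obstacle: the argument is essentially bookkeeping built on \pref{;ptcomp} and \eqref{;opp}. The one step that calls for a bit of care is the last paragraph, where the two durations $t+u$ and $t$ must be kept separate and one must use that an identity in law on $C([0,t+u];\R)$ is preserved by the measurable operation of restricting to $[0,t]$ and then applying $\ct^{t}$; the algebraic identification of $e^{z}$ is purely mechanical.
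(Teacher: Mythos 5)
Your proposal is correct and follows essentially the same route as the paper: both rest on \pref{;ptcomp} to rewrite $\ct^{t}\circ\ct^{t+u}$ as a single $\tr^{t}_{z}(\db{\mu})$ and on \cref{;cmain} (i.e.\ \eqref{;opp} applied twice) to identify the law. The only difference is bookkeeping order --- you concatenate $B$ and $\cB$ into one drifted Brownian motion $\psi$ on $[0,t+u]$ at the outset, whereas the paper first proves the invariance for the expression written in terms of $\db{\mu}_{t+u}$, $\da{\mu}_{t+u}$ and then invokes independence of increments to substitute the independent copy $\dcb{\mu}$.
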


\begin{proof}
 Since there is nothing to prove in the case $u=0$, we let 
 $u>0$. In view of \cref{;cmain} and \pref{;ptcomp}, the process 
 \begin{align*}
  \db{\mu }_{s}-\log \left( 
  1+\frac{\da{\mu }_{s}}{\da{\mu }_{t}}\left\{ 
  \frac{e^{2\db{\mu }_{t}}}
  {
  1+\bigl( \da{\mu }_{t}\!/\da{\mu }_{t+u}\bigr) 
  \bigl( e^{2\db{\mu }_{t+u}}-1\bigr) 
  }-1
  \right\} 
  \right) ,\quad 0\le s\le t,
 \end{align*}
 has the same law as $\bigl\{ \db{\mu }_{s}\bigr\} _{0\le s\le t}$. 
 Because Brownian motion has independent increments, we see that 
 the pair of $\bigl\{ \db{\mu }_{s}\bigr\} _{0\le s\le t}$ and the 
 random variable 
 \begin{align*}
  1+\frac{\da{\mu }_{t}}{\da{\mu }_{t+u}}\bigl( e^{2\db{\mu }_{t+u}}-1\bigr) 
 \end{align*}
 is identical in law with that of $\bigl\{ \db{\mu }_{s}\bigr\} _{0\le s\le t}$ 
 and 
 \begin{align*}
  1+\frac{\da{\mu }_{t}}{\da{\mu }_{t}+e^{2\db{\mu }_{t}}\!\dca{\mu }_{u}}
  \bigl( 
  e^{2\dcb{\mu }_{u}}\!e^{2\db{\mu }_{t}}-1
  \bigr) .
 \end{align*}
 Since the last displayed random variable equals 
 \begin{align*}
  e^{2\db{\mu }_{t}}\!
  \frac{
  \dca{\mu }_{u}+\da{\mu }_{t}e^{2\dcb{\mu }_{u}}
  }
  {
  \da{\mu }_{t}+e^{2\db{\mu }_{t}}\!\dca{\mu }_{u}
  },
 \end{align*}
 we have the claim.
\end{proof}

To see that the above proposition generalizes \pref{;pinv} partly,
notice that, when $\mu >0$,
\begin{align*}
 \frac{
  e^{2\db{\mu }_{t}}\!\dca{\mu }_{u}+\da{\mu }_{t}
  }
  {
  \dca{\mu }_{u}+e^{2\dcb{\mu }_{u}}\!\da{\mu }_{t}
  }
  =\frac{
  e^{2\db{\mu }_{t}}\!e^{-2\dcb{\mu }_{u}}\!\dca{\mu }_{u}+e^{-2\dcb{\mu }_{u}}\!\da{\mu }_{t}
  }
  {
  e^{-2\dcb{\mu }_{u}}\!\dca{\mu }_{u}+\da{\mu }_{t}
  }
\end{align*}
in \eqref{;qpdii} converges in law to 
\begin{align*}
 \frac{e^{2\db{\mu }_{t}}\!/(2\ga _{\mu })}{1/(2\ga _{\mu })+\da{\mu }_{t}}
 =\frac{e^{2\db{\mu }_{t}}}{1+2\ga _{\mu }\da{\mu }_{t}}
\end{align*}
as $u\to \infty $, owing to Dufresne's identity~\eqref{;duf}, 
with $\ga _{\mu }$ being independent of $\db{\mu }$; recall the 
reasoning after equation~\eqref{;rv} in the proof of \pref{;poppdg}. 
The case $\mu <0$ is similar but treated more readily.

%%%%%%%%% New Section %%%%%%%%%
\section{Some related results and extensions}\label{;ssrre}

In this section, we provide some results related to those 
introduced above, including extensions of \tref{;tmain}.

To begin with, for every point $a\in \R $ and 
$\phi \in C([0,\infty );\R )$, we denote by 
$\tau _{a}(\phi )$ the first hitting time of $\phi $ to the level $a$:
\begin{align*}
 \tau _{a}(\phi ):=\inf \{ s\ge 0;\,\phi _{s}=a\},
\end{align*}
with the convention $\inf \emptyset =\infty $. 
Let $\beta =\{ \beta (s)\} _{s\ge 0}$ and 
$\hB=\{ \hB _{s}\} _{s\ge 0}$ be two one-dimensional standard 
Brownian motions that are independent of $B$. 
Let $x\in \R $ be fixed and denote
\begin{align}\label{;rvt}
 \tau ^{x}:=\tau _{\cosh (x+B_{t})}(\hB ^{(\cosh x/Z_{t})})
\end{align}
for simplicity. As recalled in \sref{;intro}, it is shown in 
\cite{har22} that the process 
\begin{align*}
 \tr _{x+B_{t}-\argsh (e^{B_{t}}\sinh x +\beta (A_{t}))}(B)(s),\quad 
 0\le s\le t, 
\end{align*}
is a Brownian motion; more precisely, 
what in fact we have proven is 

\begin{prop}[{\cite[Theorem~3.1]{har22}}]\label{;psdi0}
Under the above setting, we have the following for every $x\in \R $: 
\begin{itemize}
 \item[\thetag{i}] the pair of the process 
\begin{align*}
 \tr _{x+B_{t}-\argsh (e^{B_{t}}\sinh x +\beta (A_{t}))}(B)(s),\quad 
 0\le s\le t, 
\end{align*}
and the random variable $A_{t}$ is identical in law with that of 
$\{ B_{s}\} _{0\le s\le t}$ and $\tau ^{x}$;
 
 \item[\thetag{ii}] the pair of the process 
\begin{align*}
 \tr _{\log (A_{t}/\tau ^{x})}(B)(s),\quad 0\le s\le t,
\end{align*}
and the random variable $\log (A_{t}/\tau ^{x})$ is identical in law 
with that of $\{ B_{s}\} _{0\le s\le t}$ and 
\begin{align*}
 \argsh \bigl( e^{B_{t}}\sinh x +\beta (A_{t})\bigr) -x-B_{t}.
\end{align*}
\end{itemize}
\end{prop}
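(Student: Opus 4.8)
The plan is to prove \thetag{i} by conditioning on the $\sigma $-field $\Z _t$ generated by $\{ Z_s\} _{0\le s\le t}$, thereby reducing the claim to a single ``one-time'' identity in law that is checked by a joint-transform computation; then \thetag{ii} will follow from \thetag{i} by a deterministic manipulation of the maps $\tr _z$.

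Put $z:=x+B_t-\argsh \bigl( e^{B_t}\sinh x+\beta (A_t)\bigr) $, so that the process in \thetag{i} is $\tr _z(B)$ and $\tr _z(B)(t)=\argsh \bigl( e^{B_t}\sinh x+\beta (A_t)\bigr) -x$ by \lref{;lttrans}\thetag{i}. The structural point is that $\tr _z$ preserves the entire $Z$-process --- $Z_s(\tr _z(B))=Z_s$ for all $s$, by applying \lref{;lttrans}\thetag{iii} pathwise --- and that, by \eqref{;deriad}, a continuous path on $[0,t]$ vanishing at the origin is recovered from its $Z$-process together with its terminal value. Hence there is one measurable map $\Psi $ with $\tr _z(B)=\Psi \bigl( \{ Z_s\} _{0\le s\le t},\tr _z(B)(t)\bigr) $ and $B=\Psi \bigl( \{ Z_s\} _{0\le s\le t},B_t\bigr) $. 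Since moreover $A_t=e^{B_t}Z_t$ depends only on $(\Z _t,B_t)$ while $\tau ^x$ is a function of $\Z _t$, $B_t$ and the independent Brownian motion $\hB $, it suffices to prove that, conditionally on $\Z _t$,
\begin{align*}
 \bigl( \argsh (e^{B_t}\sinh x+\beta (A_t))-x,\,A_t\bigr) \eqd \bigl( B_t,\,\tau ^x\bigr) .
\end{align*}
By \cite[Proposition~1.7]{myPI} (recalled in the proof of \lref{;lcsym}), on $\{ Z_t=1/u\} $ the conditional law of $B_t$ has density $b\mapsto \{ 2K_0(u)\} ^{-1}e^{-u\cosh b}$ on $\R $; given in addition $B_t=b$, the variable $\beta (A_t)$ is $N(0,e^b/u)$ and $\tau ^x$ is the first passage of an independent Brownian motion with drift $u\cosh x$ to the level $\cosh (x+b)$. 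Consequently everything reduces to: for every $u>0$ and $x\in \R $, with $b$ of the above law, $g\sim N(0,1)$ and $\hB $ a standard Brownian motion, mutually independent,
\begin{align*}
 \left( \argsh \!\left( e^b\sinh x+g\sqrt{e^b/u}\,\right) -x,\;\frac{e^b}{u}\right) \eqd \left( b,\;\tau _{\cosh (x+b)}\bigl( \hB ^{(u\cosh x)}\bigr) \right) .
\end{align*}

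To verify this, I would compute $\ex \bigl[ e^{i\theta (\cdot )_1}e^{-\lambda (\cdot )_2}\bigr] $, $\theta \in \R $, $\lambda \ge 0$, on each side. For the right-hand side, conditioning on $b$ and using the classical formula $\ex \bigl[ e^{-\lambda \tau _a(\hB ^{(c)})}\bigr] =e^{ac-a\sqrt{c^2+2\lambda }}$ for $c\ge 0$, $a>0$, the elementary identity $\cosh x\cosh (x+b)-\cosh b=\sinh x\sinh (x+b)$, and the substitution $w=x+b$, yields
\begin{align*}
 \frac{1}{2K_0(u)}\,e^{-i\theta x}\int _{\R }e^{i\theta w}\exp \bigl( u\sinh x\sinh w-\nu \cosh w\bigr) \,dw,\qquad \nu :=\sqrt{2\lambda +u^2\cosh ^2x}.
\end{align*}
For the left-hand side, conditioning on the second coordinate $a=e^b/u$, which is of generalized inverse Gaussian type with density $\propto a^{-1}e^{-u^2a/2-1/(2a)}$, linearizing $\argsh $ via $s=\sinh w$ in the Gaussian integral, and using $1+\sinh ^2w=\cosh ^2w$, the resulting $a$-integral $\int _0^\infty a^{-3/2}e^{-\nu ^2a/2-\cosh ^2w/(2a)}\,da=\sqrt{2\pi }\,(\cosh w)^{-1}e^{-\nu \cosh w}$ (a $K_{1/2}$ evaluation) makes the $a$- and $g$-exponentials combine, and the left-hand side collapses to the very same expression. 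I expect this step --- arranging the computation so that the generalized inverse Gaussian/Bessel structure forces the two transforms to coincide, and checking that $\Psi $ is applied identically on the two sides of the reduction --- to be the main obstacle. This proves \thetag{i}.

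Finally, \thetag{ii} follows by applying to both members of the identity in \thetag{i} the deterministic map $(\phi ,a)\mapsto \bigl( \tr _{\log (A_t(\phi )/a)}(\phi ),\,\log (A_t(\phi )/a)\bigr) $, which is well defined as $A_t(\phi )$ and the second coordinates are a.s.\ positive. On $(\tr _z(B),A_t)$ one has $A_t(\tr _z(B))=e^{-z}A_t$ by \lref{;lttrans}\thetag{ii}, so the new shift is $-z$; moreover $\tr _{-z}(\tr _z(B))=B$ by \lref{;lttrans}\thetag{iv} and $-z=\argsh (e^{B_t}\sinh x+\beta (A_t))-x-B_t$. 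On $(B,\tau ^x)$ the map produces $\bigl( \tr _{\log (A_t/\tau ^x)}(B),\log (A_t/\tau ^x)\bigr) $. Equating the images gives exactly the identity in law asserted in \thetag{ii}.
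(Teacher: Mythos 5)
Your proposal is correct, but there is nothing in this paper to compare it with step by step: Proposition~\ref{;psdi0} is stated as a recalled result from \cite[Theorem~3.1]{har22} and is not proven here. Your route is nevertheless very much in the spirit of the paper's second proof of \tref{;tmain}: as in \lref{;lcsym}, you condition on $\Z _{t}$ and invoke \cite[Proposition~1.7]{myPI}, together with the pathwise facts that $\tr _{z}$ preserves the $Z$-process (\lref{;lttrans}\thetag{iii}) and that a path is recovered from its $Z$-process and terminal value via \eqref{;deriad}, to reduce \thetag{i} to the fixed-$u$ identity in law between $\bigl( \argsh (e^{b}\sinh x+g\sqrt{e^{b}/u})-x,\,e^{b}/u\bigr) $ and $\bigl( b,\,\tau _{\cosh (x+b)}(\hB ^{(u\cosh x)})\bigr) $ with $b$ having the hyperbolic-cosine law; this reduction is sound, since $A_{t}=e^{B_{t}}Z_{t}$ and the second coordinates are carried along unchanged by the common measurable map. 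I also checked the joint Fourier--Laplace computation you sketch: on the hitting-time side the formula $\exp \{ ac-a\sqrt{c^{2}+2\lambda }\} $ and the identity $\cosh x\cosh (x+b)-\cosh b=\sinh x\sinh (x+b)$, and on the Gaussian side the substitution $\sinh w=ua\sinh x+g\sqrt{a}$ followed by the $K_{1/2}$-type integral, do make both sides collapse to $\{ 2K_{0}(u)\} ^{-1}e^{-i\theta x}\int _{\R }e^{i\theta w}\exp \{ u\sinh x\sinh w-\nu \cosh w\} \,dw$ with $\nu =\sqrt{2\lambda +u^{2}\cosh ^{2}x}$, and uniqueness of mixed Fourier--Laplace transforms then yields the conditional identity for every $u>0$. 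The passage from \thetag{i} to \thetag{ii} via the map $(\phi ,a)\mapsto \bigl( \tr _{\log (A_{t}(\phi )/a)}(\phi ),\log (A_{t}(\phi )/a)\bigr) $ is also correct: by \thetag{i}, \thetag{ii} and \thetag{iv} of \lref{;lttrans}, the image of the left-hand pair is $\bigl( B,\,\argsh (e^{B_{t}}\sinh x+\beta (A_{t}))-x-B_{t}\bigr) $, as required. Whether this coincides with the argument given in \cite{har22} (which presumably rests on the absolute-continuity relation \cite[Theorem~1.2]{har22} quoted in \ssref{;ssp1tmain}) cannot be judged from this paper alone, but as a self-contained proof your proposal is valid.
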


By virtue of \tref{;tmain}, similar distributional identities to the above 
hold as in the following proposition.

\begin{prop}\label{;psdi}
Under the same setting as in \pref{;psdi0}, we have the following for 
every $x\in \R $:

\begin{itemize}
 \item[\thetag{i}] the pair of the process 
 \begin{align*}
  \tr _{\argsh (e^{B_{t}}\sinh x +\beta (A_{t}))-x+B_{t}}(B)(s),\quad 0\le s\le t,
 \end{align*}
 and the random variable $A_{t}$ is identical in law with that of 
 $\{ B_{s}\} _{0\le s\le t}$ and 
 \begin{align*}
  \tau _{\cosh (x-B_{t})}(\hB ^{(\cosh x/Z_{t})});
 \end{align*}
 
 \item[\thetag{ii}] the pair of the process 
 \begin{align*}
  \tr _{\log (e^{2B_{t}}\tau ^{x}/A_{t})}(B)(s),\quad 
 0\le s\le t,
 \end{align*}
 and the random variable $\log (A_{t}/\tau ^{x})$ is identical in law with 
 that of $\{ B_{s}\} _{0\le s\le t}$ and 
 \begin{align*}
  \argsh \bigl( e^{-B_{t}}\sinh x+e^{-B_{t}}\beta (A_{t})\bigr) -x+B_{t}.
 \end{align*}
\end{itemize}
\end{prop}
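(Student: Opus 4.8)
The plan is to derive Proposition~\ref{;psdi} from Proposition~\ref{;psdi0} by applying the involution $\ct$ and invoking \tref{;tmain}. The starting point is that, by \tref{;tmain} and the fact that $\ct \circ \ct = \id$ (\pref{;pct}\thetag{iv}), applying the transformation $\ct$ to the Brownian motion appearing in \pref{;psdi0} produces again a Brownian motion, while transforming the anticipating factors in a controlled way via the composition rule \eqref{;compt}: $(\ct \circ \tr _{z})(\phi ) = \tr _{2\phi _{t}-z}(\phi )$. Concretely, for part~\thetag{i}, I would apply $\ct$ to the process in \pref{;psdi0}\thetag{i}. Writing $z = x + B_{t} - \argsh (e^{B_{t}}\sinh x + \beta (A_{t}))$, we have $\ct \bigl( \tr _{z}(B)\bigr) = \tr _{2B_{t}-z}(B) = \tr _{\argsh (e^{B_{t}}\sinh x + \beta (A_{t})) - x + B_{t}}(B)$, which is exactly the process in \pref{;psdi}\thetag{i}. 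Since $\ct(B) \eqd B$ by \tref{;tmain}, and since $\ct$ acts only on the path while leaving the driving randomness $(\,\beta, \hB, \{B_s\}\,)$ structurally intact, one needs to track what happens to the companion random variable $A_{t}$ under this correspondence.

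The key mechanism is that \tref{;tmain} should be used not merely as an identity in law for the one-dimensional process $\ct(B) \eqd B$, but in the stronger joint form: the pair $\bigl( \ct(B), \text{(functionals of } \ct(B)) \bigr)$ has the same law as $\bigl( B, \text{(same functionals of } B) \bigr)$. In particular the companion variable in \pref{;psdi0}, namely $\tau^{x} = \tau _{\cosh (x+B_{t})}(\hB ^{(\cosh x/Z_{t})})$, depends on $B$ only through $B_{t}$ and $Z_{t}$. Under $\ct$ we have $\ct(B)(t) = -B_{t}$ by \pref{;pct}\thetag{i} and $Z_{t}(\ct(B)) = Z_{t}$ by \pref{;pct}\thetag{iii}, so $\tau^{x}$ computed from $\ct(B)$ becomes $\tau _{\cosh(x - B_{t})}(\hB ^{(\cosh x / Z_{t})})$ — precisely the companion variable claimed in \pref{;psdi}\thetag{i}. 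For part~\thetag{ii}, the same reasoning applies: in \pref{;psdi0}\thetag{ii} the transformation is $\tr _{\log(A_{t}/\tau^{x})}(B)$; applying $\ct$ and using \eqref{;compt} gives $\tr _{2B_{t} - \log(A_{t}/\tau^{x})}(B) = \tr _{\log(e^{2B_{t}}\tau^{x}/A_{t})}(B)$, matching the process in \pref{;psdi}\thetag{ii}, while the companion random variable $\log(A_{t}/\tau^{x})$ is unchanged under this manipulation (it is already expressed in the same way), and the random variable $\argsh(e^{B_{t}}\sinh x + \beta(A_{t})) - x - B_{t}$ in \pref{;psdi0}\thetag{ii}, when the roles are traced through, becomes $\argsh(e^{-B_{t}}\sinh x + e^{-B_{t}}\beta(A_{t})) - x + B_{t}$; this last identification requires substituting $-B_{t}$ for $B_{t}$ and $e^{-2B_{t}}A_{t}$ for $A_{t}$, using that $(e^{B_{t}}, A_{t}) \eqd (e^{-B_{t}}, e^{-2B_{t}}A_{t})$ as recorded in \rref{;rrev}, together with the fact that $\beta(e^{-2B_{t}}A_{t}) \eqd e^{-B_{t}}\beta(A_{t})$ by Brownian scaling and independence of $\beta$ from $B$.

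The main obstacle I anticipate is bookkeeping the joint law carefully rather than any deep new idea: one must verify that the correspondence furnished by \tref{;tmain} is genuinely a measure-preserving map at the level of the enriched probability space carrying $B$, $\beta$ and $\hB$, so that applying $\ct$ to $B$ transports the \emph{entire} pair (process, companion variable) of \pref{;psdi0} to the pair of \pref{;psdi}, and not merely the marginals. The cleanest way to handle this is to reformulate \tref{;tmain} as the statement that for every bounded measurable $\Phi$ on $C([0,t];\R)$ one has $\ex[\Phi(\ct(B))] = \ex[\Phi(B)]$, then apply it with $\Phi(\phi) = \ex[ G( \tr _{\psi(\phi_t, \text{functionals})}(\phi), \text{companion}(\phi)) ]$ where the inner expectation integrates out $\beta$ and $\hB$, and the functionals of $\phi$ are $\phi_t$, $Z_t(\phi)$, $A_t(\phi)$; since all of these transform explicitly and consistently under $\ct$ by \pref{;pct}\thetag{i}--\thetag{iii}, the identities fall out. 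I would then record, for each of \thetag{i} and \thetag{ii}, the short explicit computation of $2B_{t} - z$ for the relevant exponent $z$ and the short argument via \rref{;rrev} and scaling for the $\argsh$ expression in \thetag{ii}, and that completes the proof.
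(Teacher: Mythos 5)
Your proposal is correct and follows essentially the same route as the paper: reduce each part of \pref{;psdi} to the corresponding part of \pref{;psdi0} via relation \eqref{;compt}, then transport the companion variables using \tref{;tmain} jointly with properties \thetag{i}--\thetag{iii} of \pref{;pct}, the independence of $B$ from $\beta$ and $\hB$, and Brownian scaling for the $\argsh$ expression in \thetag{ii}. The only cosmetic difference is your invocation of \rref{;rrev}, where the paper simply rewrites the relevant quantities pathwise in terms of $\ct (B)$ before applying \tref{;tmain}, exactly as in the functional formulation you give in your final paragraph.
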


\begin{proof}
\thetag{i} Because of \pref{;psdi0}\thetag{i} and the relation that, 
for each $0\le s\le t$,  
\begin{align*}
 \ct \bigl( 
 \tr _{x+B_{t}-\argsh (e^{B_{t}}\sinh x +\beta (A_{t}))}(B)
 \bigr) (s)
 =\tr _{\argsh (e^{B_{t}}\sinh x +\beta (A_{t}))-x+B_{t}}(B)(s)
\end{align*}
due to relation~\eqref{;compt} in 
\pref{;pct}\thetag{iv}, it suffices to show that 
\begin{align}\label{;qpsdi1}
 \left( 
 \{ \ct (B)(s)\} _{0\le s\le t},\,\tau ^{x}
 \right) \eqd 
 \bigl( 
 \{ B_{s}\} _{0\le s\le t},\,\tau _{\cosh (x-B_{t})}(\hB ^{(\cosh x/Z_{t})})
 \bigr) .
\end{align}
To this end, notice that, by the definition \eqref{;rvt} of $\tau ^{x}$ and 
properties~\thetag{i} and \thetag{iii} in \pref{;pct}, 
\begin{align*}
 \tau ^{x}=\tau _{\cosh (x-\ct (B)(t))}(\hB ^{(\cosh x/Z_{t}(\ct (B)))}).
\end{align*}
As a consequence, by \tref{;tmain} and the independence of $B$ and $\hB $, 
we have the claimed identity \eqref{;qpsdi1}.

\thetag{ii} Similarly to \thetag{i}, by virtue of \pref{;psdi0}\thetag{ii}, 
it suffices to prove that 
\begin{equation}\label{;qpsdi2}
 \begin{split}
  &\left( 
  \{ \ct (B)(s)\} _{0\le s\le t},\,
  \argsh \bigl( e^{B_{t}}\sinh x +\beta (A_{t})\bigr) -x-B_{t}
  \right) \\
  &\eqd \left( 
  \{ B_{s}\} _{0\le s\le t},\,
  \argsh \bigl( e^{-B_{t}}\sinh x +e^{-B_{t}}\beta (A_{t})\bigr) 
  -x+B_{t}
  \right) .
 \end{split}
\end{equation}
Note that, by properties~\thetag{i} and \thetag{ii} in \pref{;pct}, 
the second component in the left-hand side may be written as 
\begin{align*}
 \argsh \bigl( e^{-\ct (B)(t)}\sinh x +\beta 
 \bigl( e^{-2\ct (B)(t)}A_{t}(\ct (B))\bigr) \bigr) -x+\ct (B)(t),
\end{align*}
which entails that the left-hand side of the claimed identity~\eqref{;qpsdi2} 
is identical in law with 
\begin{align*}
 \left( \{ B_{s}\} _{0\le s\le t},\,
  \argsh \bigl( e^{-B_{t}}\sinh x+\beta (e^{-2B_{t}}A_{t})\bigr) -x+B_{t}\right) 
\end{align*}
owing to \tref{;tmain}. This verifies \eqref{;qpsdi2} by the scaling property 
of Brownian motion and the independence of $B$ and $\beta $.
\end{proof}

\begin{rem}\label{;rsecond}
The identity in law between the second components in \eqref{;qpsdi1} 
may also be explained by means of the time reversal \eqref{;itrev}; 
indeed, by \eqref{;itrev}, the random variable $\tau ^{x}$ has the same 
law as 
\begin{align*}
 \tau _{\cosh (x+R(B)(t))}(\hB ^{(\cosh x/Z_{t}(R(B)))}),
\end{align*}
which is nothing but the second component in the right-hand side of 
\eqref{;qpsdi1} because $R(B)(t)=-B_{t}$ and 
\begin{align*}
 Z_{t}(R(B))&=e^{-R(B)(t)}A_{t}(R(B))\\
 &=e^{B_{t}}e^{-2B_{t}}A_{t}\\
 &=Z_{t}
\end{align*}
by the definition \eqref{;defz} of the transformation $Z$.
\end{rem}

Next we will see that \pref{;pinv} may be rephrased as

\begin{prop}\label{;pinvr}
For every $x\ge 0$, it holds that, for any nonnegative measurable 
functional $F$ on $C([0,t];\R ^{2})$,
\begin{align}
 &\ex \!\left[ 
 F\bigl( 
 \tr _{\log \{ e^{2B_{t}}/(1+2xA_{t})\} }(B),B \bigr) 
 \right] \notag \\
 &=\ex \!\left[ 
 \frac{e^{2B_{t}}}{e^{2B_{t}}-2xA_{t}}\exp \left( 
 x-\frac{x}{e^{2B_{t}}-2xA_{t}}
  \right) 
 F\bigl( 
 B,\tr _{\log (e^{2B_{t}}-2xA_{t})}(B)\bigr) ;\,\frac{e^{2B_{t}}}{2A_{t}}>x
 \right]  \label{;rephr1}
\intertext{and}
 &\ex \!\left[ 
 F\bigl( 
 \tr _{\log (e^{2B_{t}}+2xA_{t})}(B),B\bigr) 
 \right] \notag \\
 &=\ex \!\left[ 
 \frac{1}{1-2xA_{t}}\exp \left( 
 x-\frac{xe^{2B_{t}}}{1-2xA_{t}}
  \right) 
 F\bigl( 
 B,\tr _{\log \{ e^{2B_{t}}/(1-2xA_{t})\} }(B)
 \bigr) ;\,\frac{1}{2A_{t}}>x
 \right] . \label{;rephr2}
\end{align}
\end{prop}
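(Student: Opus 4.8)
The plan is to obtain \pref{;pinvr} from \pref{;pinv} by disintegrating each identity in \pref{;pinv} with respect to the independent gamma variable $\ga _{\mu }$ and then letting the drift tend to~$0$. I would carry out \eqref{;rephr1} in detail; \eqref{;rephr2} is obtained by exactly the same scheme, starting instead from \eqref{;pinv2} and using $X^{2}=\T _{2\ga _{\mu }}(\ct (\db{-\mu }))$ (\lref{;lexpr}\thetag{2}) in place of $X^{1}=\ct (\T _{2\ga _{\mu }}(\db{\mu }))$ (\lref{;lexpr}\thetag{1}).

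With this identification of $X^{1}$, identity \eqref{;pinv1}, in the joint form recorded in \rref{;rpinv}, says that the pair of $\ga _{\mu }$ and $\{ (X^{1}_{s},\db{\mu }_{s})\} _{0\le s\le t}$ is identical in law with the pair of $Y:=\ga _{\mu }e^{2\db{\mu }_{t}}\!/(1+2\ga _{\mu }\da{\mu }_{t})$ and $\{ (\db{\mu }_{s},X^{1}_{s})\} _{0\le s\le t}$, with $\ga _{\mu }$ independent of $B$ on the left. I would test this against a bounded measurable $h$ applied to the first coordinate. By independence and Fubini, the left-hand side becomes $\int _{0}^{\infty }h(x)p_{\mu }(x)\,\ex [F(\ct (\T _{2x}(\db{\mu })),\db{\mu })]\,dx$, where $p_{\mu }$ is the $\mathrm{Gamma}(\mu )$ density and $x$ is now a deterministic parameter. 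For the right-hand side I would work conditionally on the path $\db{\mu }$ and change variables from $\ga _{\mu }$ to $Y$: the inverse is $\ga _{\mu }=Y/(e^{2\db{\mu }_{t}}-2Y\da{\mu }_{t})$, valid on $\{ e^{2\db{\mu }_{t}}>2Y\da{\mu }_{t}\} $, with Jacobian $e^{2\db{\mu }_{t}}\!/(e^{2\db{\mu }_{t}}-2Y\da{\mu }_{t})^{2}$, and the algebraic relation $1+2\ga _{\mu }\da{\mu }_{t}=e^{2\db{\mu }_{t}}\!/(e^{2\db{\mu }_{t}}-2Y\da{\mu }_{t})$ turns $\ct (\T _{2\ga _{\mu }}(\db{\mu }))$ into $\tr _{\log (e^{2\db{\mu }_{t}}-2Y\da{\mu }_{t})}(\db{\mu })$ by \lref{;lexpr}\thetag{1}. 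Both sides then have the form $\int _{0}^{\infty }h(x)(\cdots )dx$ with integrands continuous in $x$ on $(0,\infty )$ (dominated convergence); since $h$ is arbitrary they agree pointwise there, and cancelling the common factor $x^{\mu -1}/\Gamma (\mu )$ gives, for each $\mu >0$ and every $x>0$,
\begin{align*}
 \ex \bigl[ F(\ct (\T _{2x}(\db{\mu })),\db{\mu })\bigr]
 =\ex \!\left[
 \ind _{\{ e^{2\db{\mu }_{t}}>2x\da{\mu }_{t}\} }
 \frac{e^{2\db{\mu }_{t}}}{(e^{2\db{\mu }_{t}}-2x\da{\mu }_{t})^{\mu +1}}
 \exp \!\left( x-\frac{x}{e^{2\db{\mu }_{t}}-2x\da{\mu }_{t}}\right)
 F\bigl( \db{\mu },\tr _{\log (e^{2\db{\mu }_{t}}-2x\da{\mu }_{t})}(\db{\mu })\bigr)
 \right] .
\end{align*}

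It then remains to remove the drift. Applying the Cameron--Martin formula to both sides replaces $\db{\mu }$ by $B$ at the price of the density $e^{\mu B_{t}-\mu ^{2}t/2}$, which cancels, and I would then let $\mu \downarrow 0$: the exponent $\mu +1$ drops to $1$ and the surviving factor $e^{\mu B_{t}}$ tends to~$1$. Recalling $\ct (\T _{2x}(B))=\tr _{\log \{ e^{2B_{t}}/(1+2xA_{t})\} }(B)$ (\lref{;lexpr}\thetag{1}) and $\{ e^{2B_{t}}>2xA_{t}\} =\{ e^{2B_{t}}/(2A_{t})>x\} $, the limit is exactly \eqref{;rephr1}; the case $x=0$ is \cref{;cmain} with $\mu =0$. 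For \eqref{;rephr2} one runs the identical argument from \eqref{;pinv2}, with the change of variables $\ga _{\mu }=Ye^{2\db{-\mu }_{t}}\!/(1-2Y\da{-\mu }_{t})$ on $\{ 2Y\da{-\mu }_{t}<1\} $ and the Cameron--Martin density $e^{-\mu B_{t}-\mu ^{2}t/2}$; the $\mu \downarrow 0$ limit produces \eqref{;rephr2}.

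The genuinely delicate points are the dominated-convergence arguments used above — both for the continuity in $x$ of the disintegrated densities and, above all, for the passage $\mu \downarrow 0$. Near the boundary $\{ e^{2B_{t}}=2xA_{t}\} $ (resp.\ $\{ 2xA_{t}=1\} $) the weight blows up like $(e^{2B_{t}}-2xA_{t})^{-1}$ (resp.\ $(1-2xA_{t})^{-1}$), but the accompanying factor $\exp \{ -x/(e^{2B_{t}}-2xA_{t})\} $ (resp.\ $\exp \{ -xe^{2B_{t}}/(1-2xA_{t})\} $) decays there faster than any power; hence, for $\mu \in (0,1]$, the relevant integrands are dominated path-wise by envelopes of the form $c\,e^{|B_{t}|}(1+A_{t})$, which are integrable by elementary Gaussian estimates, while on the other side $e^{\pm \mu B_{t}}\le e^{|B_{t}|}$. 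Once these uniform bounds are in hand, all the interchanges are legitimate and the two identities follow.
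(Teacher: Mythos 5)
Your argument is correct in substance, but it reaches \pref{;pinvr} by a genuinely different route from the paper. The paper keeps the gamma variable inside and removes the drift \emph{exactly}: starting from \pref{;pinv}~(2), it applies the Cameron--Martin formula and absorbs the compensating factor $e^{\mu \phi ^{2}_{t}}$ into $F$, which produces the weight $(e^{2B_{t}}+2\ga _{\mu }A_{t})^{-\mu }$ for the driftless Brownian motion; only then does it integrate out $\ga _{\mu }$ by Fubini and a change of variables, and it disintegrates in $x$ by invoking the injectivity of the Mellin transform in the parameter $\mu $, followed by a continuity-in-$x$ upgrade and a monotone-class step. You disintegrate first, at fixed $\mu $: testing the joint identity of \rref{;rpinv} against an arbitrary function of the scalar coordinate and performing the conditional change of variables (your inverse map, Jacobian $e^{2\db{\mu }_{t}}/(e^{2\db{\mu }_{t}}-2Y\da{\mu }_{t})^{2}$ and the relation $1+2\ga _{\mu }\da{\mu }_{t}=e^{2\db{\mu }_{t}}/(e^{2\db{\mu }_{t}}-2Y\da{\mu }_{t})$ are all correct) yields a drifted weighted identity with exponent $\mu +1$, and you then remove the drift by Cameron--Martin plus a limit $\mu \downarrow 0$. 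Your route avoids Mellin injectivity and produces as a by-product a Girsanov-type formula for the anticipative transform $\tr _{\log \{ e^{2\db{\mu }_{t}}/(1+2x\da{\mu }_{t})\} }(\db{\mu })$ valid for every drift, the analogue of the identity in the paper's closing remark for $T_{2x}$; the paper's route buys exactness (no limiting argument), since the drift is cancelled algebraically rather than sent to zero.

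Two points should be tightened. First, your dominating envelope $c\,e^{|B_{t}|}(1+A_{t})$ is not the right one: using $\sup _{v>0}v^{-(\mu +1)}e^{-c/v}=\{ (\mu +1)/(ec)\} ^{\mu +1}$, one gets, uniformly for $\mu \in (0,1]$ and fixed $x>0$, bounds of the form $C_{x}\,e^{3|B_{t}|}$ for \eqref{;rephr1} and $C_{x}\,(1+e^{-4B_{t}})e^{|B_{t}|}$ for \eqref{;rephr2}, which are Gaussian-integrable; your reason (the exponential beats the power blow-up at the boundary) is correct, only the stated bound is off. Second, the passage from a.e.\ $x$ to every $x>0$, and the continuity in $x$ of both sides, require taking $F$ bounded and continuous first (together with the a.s.\ continuity of the indicator at each fixed $x$, which holds because $e^{2B_{t}}/(2A_{t})$ has a density), and the statement for general nonnegative measurable $F$ must then be recovered by the density and monotone-class argument; you gesture at the continuity step but should state this two-stage structure explicitly, as the paper does.
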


The above proposition extends \tref{;tmain} in the sense that 
the theorem is recovered by taking $x=0$: 
\begin{align}\label{;mu0}
 \left\{ 
 \left( 
 \ct (B)(s),\,B_{s}
 \right) 
 \right\} _{0\le s\le t}
 \eqd \left\{ 
 \left( 
 B_{s},\,\ct (B)(s)
 \right) 
 \right\} _{0\le s\le t},
\end{align}
that is, the case $\mu =0$ in \cref{;cmain}.

\begin{proof}[Proof of \pref{;pinvr}]
Since the two identities \eqref{;rephr1} and 
\eqref{;rephr2} are proven in the same way, we only give a proof 
for the latter.

For every nonnegative measurable functional $F$ on 
$C([0,t];\R ^{2})$, it follows from \pref{;pinv}\thetag{2} that, by the 
Cameron--Martin formula, 
\begin{align*}
 \ex \!\left[ 
 F\bigl( \tr _{\log (e^{2B_{t}}+2\ga _{\mu }A_{t})}(B),B\bigr) 
 e^{-\mu B_{t}}
 \right] 
 =\ex \!\left[ 
 F\bigl( 
 B,\tr _{\log (e^{2B_{t}}+2\ga _{\mu }A_{t})}(B)
 \bigr) e^{-\mu B_{t}}
 \right] .
\end{align*}
Replacing $F$ by a nonnegative functional of the form 
$F(\phi ^{1},\phi ^{2})e^{\mu \phi ^{2}_{t}},\,(\phi ^{1},\phi ^{2})
\in C([0,t];\R ^{2})$, we have 
\begin{align}\label{;repl}
 \ex \!\left[ 
 F\bigl( \tr _{\log (e^{2B_{t}}+2\ga _{\mu }A_{t})}(B),B\bigr) 
 \right] 
 =\ex \!\left[ 
 F\bigl( 
 B,\tr _{\log (e^{2B_{t}}+2\ga _{\mu }A_{t})}(B)
 \bigr) \bigl( e^{2B_{t}}+2\ga _{\mu }A_{t}\bigr) ^{-\mu }
 \right] ,
\end{align}
where the expression of the right-hand side is due to 
property~\thetag{i} in \lref{;lttrans} and, in addition to the nonnegativity, 
we assume, for the time being, that $F$ is bounded and continuous. 
By the independence of 
$B$ and $\ga _{\mu }$, and by Fubini's theorem, the right-hand side 
of the above identity is rewritten as 
\begin{align*}
 \frac{1}{\Gamma (\mu )}
 \ex \!\left[ 
 \int _{0}^{\infty }dy\,y^{\mu -1}e^{-y}
 F\bigl( 
 B,\tr _{\log (e^{2B_{t}}+2yA_{t})}(B)
 \bigr) \bigl( e^{2B_{t}}+2yA_{t}\bigr) ^{-\mu }
 \right] ,
\end{align*}
which, by changing the variables with 
$y/(e^{2B_{t}}+2yA_{t}) =x,\,0<x<1/(2A_{t})$, is further 
rewritten as 
\begin{align*}
 \frac{1}{\Gamma (\mu )}\ex \!\left[ 
 \int _{0}^{1/(2A_{t})}
 dx\,\frac{x^{\mu -1}}{1-2xA_{t}}\exp \left( 
 -\frac{xe^{2B_{t}}}{1-2xA_{t}}
 \right) 
 F\bigl( 
 B,\tr _{\log \{ e^{2B_{t}}/(1-2xA_{t})\} }(B)
 \bigr) 
 \right] .
\end{align*}
 Therefore, identity~\eqref{;repl} is rephrased as 
\begin{align*}
 &\int _{0}^{\infty }dx\,x^{\mu -1}e^{-x}
 \ex \!\left[ 
 F\bigl( \tr _{\log (e^{2B_{t}}+2xA_{t})}(B),B\bigr) 
 \right] \\
 &=\int _{0}^{\infty }dx\,x^{\mu -1}
 \ex \!\left[ 
 \frac{1}{1-2xA_{t}}\exp \left( 
 -\frac{xe^{2B_{t}}}{1-2xA_{t}}
 \right) 
 F\bigl( 
 B,\tr _{\log \{ e^{2B_{t}}/(1-2xA_{t})\} }(B)
 \bigr) ;\,\frac{1}{2A_{t}}>x
 \right] ,
\end{align*}
where we used the independence of $B$ and $\ga _{\mu }$ 
for the left-hand side, and Fubini's theorem again for the right-hand 
side. Since the above identity holds for any $\mu >0$, the injectivity 
of the Mellin transform entails that, for a.e.\ $x\ge 0$, 
\begin{equation}\label{;rephr2d}
 \begin{split}
 &e^{-x}\ex \!\left[ 
 F\bigl( \tr _{\log (e^{2B_{t}}+2xA_{t})}(B),B\bigr) 
 \right] \\
 &=\ex \!\left[ 
 \frac{1}{1-2xA_{t}}\exp \left( 
 -\frac{xe^{2B_{t}}}{1-2xA_{t}}
 \right) 
 F\bigl( 
 B,\tr _{\log \{ e^{2B_{t}}/(1-2xA_{t})\} }(B)
 \bigr) ;\,\frac{1}{2A_{t}}>x
 \right] .
 \end{split} 
\end{equation}
It is clear that, in view of the definition \eqref{;ttrans} of 
$\{ \tr _{z}\} _{z\in \R }$, the left-hand side is continuous in $x$ by the 
bounded convergence theorem, because of the fact that 
$F$ is assumed to be bounded and continuous. 
On the other hand, notice that, in the right-hand side, the integrand 
\begin{align*}
 \frac{1}{1-2xA_{t}}\exp \left( 
 -\frac{xe^{2B_{t}}}{1-2xA_{t}}
 \right) 
 F\bigl( 
 B,\tr _{\log \{ e^{2B_{t}}/(1-2xA_{t})\} }(B)
 \bigr) 
\end{align*}
for $0\le x<1/(2A_{t})$ and $0$ otherwise, is continuous in $x$ 
a.s., thanks to the boundedness and continuity of $F$; moreover, 
it is bounded from above by the integrable random variable 
\begin{align*}
 M\max \left\{ 2e^{-2B_{t}}A_{t},1\right\} ,
\end{align*}
where 
$
M:=\sup \{ F(\phi ^{1},\phi ^{2});\,(\phi ^{1},\phi ^{2})\in C([0,t];\R ^{2})\} 
$. 
Therefore the right-hand side of 
\eqref{;rephr2d} also gives rise to a continuous function in $x$ by the 
dominated convergence theorem, ensuring that 
identity~\eqref{;rephr2d} holds for all $x\ge 0$. 
Standard arguments of density and monotone class then 
extend $F$ to any nonnegative measurable functional and 
complete the proof of \eqref{;rephr2}.
\end{proof}

We give two remarks on \pref{;pinvr}.

\begin{rem}
 \pref{;pinvr} suggests that we have the following two relations 
 for any $\psi \in C([0,t];\R )$ such that 
 $e^{2\psi _{t}}-2xA_{t}(\psi )>0$ for the former and that 
 $1-2xA_{t}(\psi )>0$ for the latter:  
 \begin{align}
  \tr _{\log\{ e^{2\phi _{t}}/(1+2xA_{t}(\phi ))\} }(\phi )
  \big| _{\phi =\tr _{\log \{ e^{2\psi _{t}}-2xA_{t}(\psi )\} }(\psi )}&=\psi ;
  \label{;pcac1}\\
  \tr _{\log\{ e^{2\phi _{t}}+2xA_{t}(\phi )\} }(\phi )
  \big| _{\phi =\tr _{\log \{ e^{2\psi _{t}}/(1-2xA_{t}(\psi ))\} }(\psi )}
  &=\psi .
  \label{;pcac2}
 \end{align}
 Indeed, a direct computation verifies these two relations. 
 As for \eqref{;pcac1}, 
 observe that, by properties~\thetag{i} and \thetag{ii} in 
 \lref{;lttrans}, 
 \begin{align*}
  &\frac{e^{2\phi _{t}}}{1+2xA_{t}(\phi )}
  \bigg| _{\phi =\tr _{\log \{ e^{2\psi _{t}}-2xA_{t}(\psi )\} }(\psi )}\\
  &=\left\{ 
  \frac{e^{\psi _{t}}}{e^{2\psi _{t}}-2xA_{t}(\psi )}
  \right\} ^{2}\times 
  \frac{1}{1+2xA_{t}(\psi )/\{ e^{2\psi _{t}}-2xA_{t}(\psi )\} }\\
  &=\frac{1}{e^{2\psi _{t}}-2xA_{t}(\psi )}.
 \end{align*}
 Therefore the left-hand side of \eqref{;pcac1} is written as 
 \begin{align*}
  \tr _{-\log \{ e^{2\psi _{t}}-2xA_{t}(\psi )\}}(\phi )
  \big| _{\phi =\tr _{\log \{ e^{2\psi _{t}}-2xA_{t}(\psi )\}}(\psi )},
 \end{align*}
 which, by \lref{;lttrans}\thetag{iv}, equals $\psi $ as claimed 
 in \eqref{;pcac1}. Similarly, as for \eqref{;pcac2}, 
 \begin{align*}
  &\bigl\{ 
  e^{2\phi _{t}}+2xA_{t}(\phi )
  \bigr\} 
  \big|_{\phi =\tr _{\log \{ e^{2\psi _{t}}/(1-2xA_{t}(\psi ))\} }(\psi )}\\
  &=\left\{ 
  \frac{1-2xA_{t}(\psi )}{e^{\psi _{t}}}
  \right\} ^{2}+2x\frac{1-2xA_{t}(\psi )}{e^{2\psi _{t}}}A_{t}(\psi )\\
  &=\frac{1-2xA_{t}(\psi )}{e^{2\psi _{t}}},
 \end{align*}
 and hence the left-hand side of \eqref{;pcac2} is 
 written as 
 \begin{align*}
  \tr _{-\log \{ e^{2\psi _{t}}/(1-2xA_{t}(\psi ))\} }(\phi )
  \big| _{\phi =\tr _{\log \{ e^{2\psi _{t}}/(1-2xA_{t}(\psi ))\} }(\psi )},
 \end{align*}
 which equals $\psi $ and verifies \eqref{;pcac2}.
\end{rem}

\begin{rem}\label{;rcac}
The two relations \eqref{;rephr1} and \eqref{;rephr2} are equivalent and 
they are related via \tref{;tmain} (or, more precisely, identity \eqref{;mu0}). 
For instance, to see that the former entails the 
latter, we replace $F$ by a functional of the form 
$
F\bigl( \ct (\phi ^{1}),\ct (\phi ^{2})\bigr) ,\,
(\phi ^{1},\phi ^{2})\in C([0,t];\R ^{2})
$.  
Then, in view of 
\lref{;lexpr}\thetag{1}, the left-hand side of \eqref{;rephr1} turns into 
\begin{equation}\label{;rcac1}
\begin{split}
 \ex \!\left[ 
 F\bigl( 
 \ct (\ct (T_{2x}(B))),\ct (B)
 \bigr) 
 \right] 
 &=\ex \!\left[ 
 F\bigl( 
 T_{2x}(B),\ct (B)
 \bigr) 
 \right] \\
 &=\ex \!\left[ 
 F\bigl( 
 T_{2x}(\ct (B)),B
 \bigr) 
 \right] ,
\end{split}
\end{equation}
which agrees with the left-hand side of \eqref{;rephr2} 
thanks to \lref{;lexpr}\thetag{2}. Here we used 
the property $\ct \circ \ct =\id $ in \pref{;pct}\thetag{iv} 
for the first line and \eqref{;mu0} for the second. 
On the other hand, as for the right-hand side, observe the relation 
\begin{align}\label{;obs1}
 \left( e^{B_{t}},\,A_{t}\right) 
 =\left( e^{-\ct (B)(t)},\,e^{-2\ct (B)(t)}A_{t}(\ct (B))\right) 
\end{align}
and the fact that 
\begin{equation}\label{;obs2}
\begin{split}
 \ct (\tr _{\log (e^{2B_{t}}-2xA_{t})}(B))(s)
 &=\tr _{2B_{t}-\log (e^{2B_{t}}-2xA_{t})}(B)(s)\\
 &=\tr _{-\log \{ 1-2xA_{t}(\ct (B))\} }(B)(s)
\end{split}
\end{equation}
for each $0\le s\le t$. Relation \eqref{;obs1} is due to properties 
\thetag{i} and \thetag{ii} in \pref{;pct}, while in \eqref{;obs2}, we used 
\eqref{;compt} for the first line and \eqref{;obs1} for the second. 
Then, thanks to these two observations \eqref{;obs1} and \eqref{;obs2}, 
the above replacement of $F$ turns the right-hand side of 
\eqref{;rephr1} into 
\begin{multline*}
 \ex \Biggl[ 
 \frac{1}{1-2xA_{t}(\ct (B))}\exp \left\{ 
 x-\frac{xe^{2\ct (B)(t)}}{1-2xA_{t}(\ct (B))}
 \right\} F\bigl( \ct (B),\tr _{-\log \{ 1-2xA_{t}(\ct (B))\} }(B)\bigr);\\
 \frac{1}{2A_{t}(\ct (B))}>x 
 \Biggr] ,
\end{multline*}
which is equal to 
\begin{align*}
 \ex \!\left[ 
 \frac{1}{1-2xA_{t}}\exp \left( 
 x-\frac{xe^{2B_{t}}}{1-2xA_{t}}
  \right) 
 F\bigl( 
 B,\tr _{-\log (1-2xA_{t})}(\ct (B))
 \bigr) ;\,\frac{1}{2A_{t}}>x
 \right] 
\end{align*}
by \eqref{;mu0}. Since 
\begin{align*}
 \tr _{-\log (1-2xA_{t})}(\ct (B))
 =\tr _{\log \{ e^{2B_{t}}/(1-2xA_{t})\} }(B)
\end{align*}
by the definition \eqref{;defct} of $\ct $ and 
the semigroup property of $\{ \tr _{z}\} _{z\in \R }$ (\lref{;lttrans}\thetag{iv}), 
the last expectation agrees with the right-hand side of \eqref{;rephr2}.
\end{rem}

Taking $F$ as a functional of the first coordinate,  for every fixed $x\in \R $,
we obtain from \pref{;pinvr} Girsanov-type formulas for the two  
anticipative transforms $\tr _{\log \{ e^{2B_{t}}/(1+2xA_{t})\} }(B)$ and 
$\tr _{\log (e^{2B_{t}}+2xA_{t})}(B)$, which is of interest 
from the viewpoint of Malliavin calculus as they would provide examples 
in which the associated Fredholm determinants are explicitly calculated; 
we refer to \cite[Section~6]{har22} and references cited therein 
in this respect.

We conclude this paper with a remark concerning the non-anticipative 
transform $T_{2x}(B)$, as treated in \eqref{;rcac1}, of the 
Brownian motion $B$ up to time $t$.
\begin{rem}
In view of \eqref{;rcac1}, it is also revealed in \rref{;rcac} that, 
for each $x\ge 0$,  
\begin{align*}
 &\ex \!\left[ 
 F\bigl( 
 T_{2x}(B),\ct (B)
 \bigr) 
 \right] \\
 &=\ex \!\left[ 
 \frac{1}{1-2xA_{t}}\exp \left( 
 x-\frac{xe^{2B_{t}}}{1-2xA_{t}}
  \right) 
 F\bigl( B,\tr _{\log \{ e^{2B_{t}}/(1-2xA_{t})\} }(B)\bigr) ;\,
 \frac{1}{2A_{t}}>x
 \right] .
\end{align*}
Replacing $F$ by a functional of the form 
$F\bigl( \phi ^{1},\ct (\phi ^{2})\bigr) ,\,
(\phi ^{1},\phi ^{2})\in C([0,t];\R ^{2})$, 
we obtain the relation 
\begin{align*}
 &\ex \!\left[ 
 F\bigl( 
 T_{2x}(B),B
 \bigr) 
 \right] \\
 &=\ex \!\left[ 
 \frac{1}{1-2xA_{t}}\exp \left( 
 x-\frac{xe^{2B_{t}}}{1-2xA_{t}}
  \right) 
 F\bigl( B,\tr _{\log (1-2xA_{t})}(B)\bigr) ;\,
 \frac{1}{2A_{t}}>x
 \right] .
\end{align*}
If, for every $\mu \in \R $, we substitute into $F$ a functional 
of the form 
\begin{align*}
 F(\phi ^{1},\phi ^{2})e^{\mu \phi ^{2}_{t}},\quad 
 (\phi ^{1},\phi ^{2})\in C([0,t];\R ^{2}),
\end{align*}
then we have, by the Cameron--Martin formula, 
\begin{align*}
 &\ex \!\left[ 
 F\bigl( T_{2x}(\db{\mu }),\db{\mu }\bigr) 
 \right] \\
 &=\ex \!\left[ 
 \frac{1}{\bigl\{   
 1-2x\da{\mu }_{t}
 \bigr\} ^{\mu +1}}\exp \left\{ 
 x-\frac{xe^{2\db{\mu }_{t}}}{1-2x\da{\mu }_{t}}
 \right\} F\bigl( \db{\mu },\tr _{\log \{ 1-2x\da{\mu }_{t}\} }(\db{\mu })\bigr) 
 ;\,\frac{1}{2\da{\mu }_{t}}>x
 \right] 
\end{align*}
for any $x\ge 0$, which extends \cite[Theorem~1.5]{dmy}, in particular,  
to the case of negative drifts $\mu $. We also note that, by \eqref{;tat}, 
the left-hand side may be expressed as 
\begin{align*}
 \ex \!\left[ 
 F\bigl( \tr _{\log \{ 1+2x\da{\mu }_{t}\} }(\db{\mu }),\db{\mu }\bigr) 
 \right] 
\end{align*}
in terms of the transformations $\tr _{z},\,z\in \R $. 
\end{rem}

%%%%%%%%% References %%%%%%%%%

\end{document}